\documentclass[abstracton, final, paper=a4, fontsize=11pt,DIV=14,bibliography=totoc]{scrartcl}

\pdfoutput=1

\usepackage[utf8]{inputenc}
\usepackage[T1]{fontenc}
\usepackage{lmodern,dsfont}
\usepackage[english]{babel}
\usepackage{amsthm, amssymb, amsmath, amsfonts, mathrsfs, dsfont, esint,textcomp}
\usepackage[colorlinks=true, pdfstartview=FitV, linkcolor=blue, citecolor=blue, urlcolor=blue,pagebackref=false]{hyperref}
\usepackage{mathtools}
\usepackage[shortlabels]{enumitem}

\usepackage[backend=biber,giveninits,maxnames=4,style=alphabetic,isbn=false, date=year, doi=false, eprint= true, url= false]{biblatex}

\usepackage{a4wide}  



\usepackage{microtype}
\usepackage{csquotes}

 \usepackage{authblk}

\setkomafont{date}{\large}

\usepackage[notcite,notref,color]{showkeys}
\definecolor{labelkey}{gray}{0.7}
\definecolor{refkey}{gray}{0.7}

\definecolor{darkblue}{rgb}{0,0,0.7} 
\definecolor{darkgreen}{rgb}{0,0.5,0}

\newcommand{\mh}[1]{{\color{darkblue}{#1}}}

\newtheorem{thm}{Theorem}[section]
\newtheorem{prop}[thm]{Proposition}
\newtheorem{lem}[thm]{Lemma}

\theoremstyle{remark}
\newtheorem{rem}[thm]{Remark}
\theoremstyle{definition}

\definecolor{green}{rgb}{0,0.5,0}

\renewcommand{\leq}{\leqslant}
\renewcommand{\geq}{\geqslant}

\renewcommand{\subset}{\subseteq}

\newcommand{\N}{\mathbb{N}}

\newcommand{\1}{\mathds{1}}
\newcommand{\R}{\mathbb{R}}

\newcommand{\Z}{\mathbb{Z}}

\newcommand{\dd}{\, \mathrm{d}}
\newcommand{\wto}{\rightharpoonup}

\newcommand{\St}{\mathrm{St}}

\DeclareMathOperator{\dist}{dist}

\DeclareMathOperator{\curl}{curl}

\DeclareMathOperator*{\dv}{div}

\DeclareMathOperator*{\supp}{supp}

\numberwithin{equation}{section}


\setcounter{secnumdepth}{2}

\bibliography{Hoefer.bib}

\begin{document}

\author[1]{Matthieu Hillairet\thanks{matthieu.hillairet@umontpellier.fr}}
\author[2]{Richard M. H\"ofer\thanks{richard.hoefer@ur.de}}
\affil[1]{Institut Montpelliérain Alexander Grothendieck, Université de Montpellier, France}
\affil[2]{Fakultät für Mathematik, Universität Regensburg, Germany}

\title{Hindered Settling of Well-Separated Particle Suspensions}
\date{\today}

\maketitle

\begin{abstract}
We consider $N$ identical inertialess rigid spherical particles in a Stokes flow in a domain $\Omega \subset \R^3$.  
We study the average sedimentation velocity of the particles when an identical force acts on each particle.
If the particles are homogeneously distributed in directions orthogonal to this force, then they hinder each other leading to a mean sedimentation velocity which is smaller than the sedimentation velocity of a single particle in an infinite fluid. 
Under suitable convergence assumptions of the particle density and a strong separation assumption, we identify the order of this hindering as well as effects of small scale inhomogeneities and boundary effects. For certain configurations we explicitly compute the leading order corrections.
\end{abstract}




\section{Introduction}
The sedimentation velocity of a single inertialess rigid sphere in an infinite fluid follows immediately from Stokes' law for the drag force.
This law entails that the sphere falls parallel to the direction of the force acting on the particle (say gravity) with amplitude:
\begin{align}  \label{eq:V.Stokes}
 	V^{\St} := \frac{|F|}{6 \pi \mu R},   
\end{align}
where $F$ is the force acting on the particle, $R$ its radius and $\mu$ the fluid viscosity.
When several particles fall in the flow, the possible interactions between the particles through the fluid make however the situation much more complicated
as soon as there are more than 3 particles, see  \cite[Section 6.1]{GuazzelliMorris12}.

When $F$ is gravity, computing the mean sedimentation velocity of a cloud of particles in a Stokes flow is then a classical problem that has been studied in many previous references \cite{Batchelor72Sedimentation,Burgers1938,Feuillebois84,GeigenmullerMazur88, Hasimoto59, Saffman73}, to mention a few. We refer to the review \cite{DavisAcrivos85}  and to the introduction of \cite{DuerinckxGloria22Sedimentation} for a historical perspective.
In these works it has been observed (mostly on a formal level) that the mean sedimentation velocity of a cloud of $N$ particles in the whole space {remains parallel to $F$ and that its magnitude $\bar{V}_N^{sed}$ behaves in fundamentally different ways dependent on the particle distribution.
\begin{enumerate}
	\item[(Dil)] There is a characterization of \textit{diluteness} of suspensions for which the settling particles behave as if they were alone in the fluid \cite{JabinOtto04}.

	\item[(MF)] If the particles are less dilute and not homogeneously distributed in directions orthogonal to gravity, a \textit{macroscopic fluid flow} is created which enhances sedimentation: for sufficiently regular particle distributions, where not too much clustering occurs, the mean sedimentation velocity is of order
	\begin{align} \label{eq:JabinOtto}
		\bar V_N^{sed} \sim \max\left\{V^\St,\frac{N F}{\mu L}\right\}
	\end{align}
	where $N$ is the number of particles and $L$ is the typical length scale of the particle cloud \cite{Hofer18MeanField, Mecherbet19}. The additional term $\frac{N F}{\mu L}$ is precisely the parameter that characterizes diluteness in the above sense for such regular distributions and  can be much larger than $V^{\St}$.
	
	\item[(HS)] If the particles are closer and homogeneously distributed in directions orthogonal to gravity, the incompressibility of the  fluid prevents the onset of a macroscopic fluid flow that enhances sedimentation. Instead, a small fluid backflow is created that \textit{hinders the particle sedimentation}.	
The order of this hindering is again sensitive to the particle 	distribution:

\begin{enumerate}
   	\item If the particles are  \textit{periodically} distributed, then
		\begin{align} \label{expansion.periodic}
		\bar V_N^{sed} = V^{\St} (1 - a_{per} \phi^{\frac 1 3} + o(\phi^{\frac 1 3}))
		\end{align}
		for some $a_{per} > 0$, where $\phi$ is the particle volume fraction inside the fluid \cite{Hasimoto59}.
		\item If the particles are  distributed according to \textit{hardcore Poisson process} with hardcore distance $2R$, then
		\begin{align} \label{batchelor}
		\bar V_N^{sed} = V^{\St} (1 - a_{uni} \phi + o(\phi))
		\end{align}
		for some $a_{uni} > 0$ \cite{Batchelor72Sedimentation}.
\end{enumerate}
\end{enumerate}

The expansion  \eqref{expansion.periodic} has been rigorously shown in \cite{Hasimoto59} on the torus.
In this contribution we show that {it persists} to hold asymptotically for large $N$ if the particles are placed in a container $\Omega \subset \R^3$ such that 
\begin{itemize}
	\item The particles respect a separation distance of order $N^{-1/3}.$
	\item The container $\Omega$ is bounded in directions orthogonal to the direction of the acting force and the particles are sufficiently close to a macroscopic density $n$ which is constant in directions orthogonal to the acting force. 
\end{itemize}
Although we are mainly interested in the (HS) situation, we complement the analysis in the case (MF) when the orthogonality assumption is not satisfied. 

{The influence of the container on the sedimentation has been studied on a formal level in several works, see \textit{e.g.} \cite{BeenakkerMazur85, GeigenmullerMazur88, BruneauFeuilleboisAnthoreHinch96}. In these works, the particles are distributed according to a hardcore Poisson process as in \cite{Batchelor72Sedimentation}. However, in contrast to \cite{Batchelor72Sedimentation} where the whole space is considered, a nonoverlapping condition with the boundary $\partial \Omega$ restricts the particle centers to lie in $\Omega_R = \{ x \in \Omega: \dist(x,\partial \Omega) > R\}$. Since the particles are spherical, this leads to a lower mean volume concentration of particles in $\Omega \setminus \Omega_R$ than in $\Omega_R$ (where this concentration is constant). This discrepancy leads to a macroscopic fluid flow $v_f$  just like in (MF). However, since the inhomogeneity only occurs in the small region $\Omega \setminus \Omega_R$, this macroscopic fluid flow, called \textit{intrinsic convection}, is much smaller than in (MF). The authors in \cite{BeenakkerMazur85, GeigenmullerMazur88, BruneauFeuilleboisAnthoreHinch96} obtain $v_f = O(\phi V^{\St})$. Moreover $v_f$ decreases the sedimentation speed of particles close to the boundary of the container while it increases the sedimentation speed of particles in the bulk.
In the present paper, we rigorously identify 
a related but quantitatively different effect. Namely, for particle configurations satisfying both items above, 
we analyze perturbations of the particle distributions on the $N^{-1/3}$-scale that occur in the bulk rather than at the boundary of the container. 
This leads to 
macroscopic fluid velocities  $v_f = O(N^{1/3}\phi^{1/3} V^{\St})$. 
The contribution of this macroscopic fluid velocity to the average sedimentation velocity is much lower though, namely of order  $\phi^{1/3} V^{\St}$.
}

\medskip

All these approaches  to the computation of sedimentation velocity (including the present contribution) are based on a similar construction of the many-particle Stokes solution. Acting a force on each particles entails a microscopic disturbance in the flow around the particle that decays very slowly to zero at infinity. Summing the microscopic disturbances of all the particles cloud on one particle then creates a macroscopic disturbance that modifies its sedimentation velocity. A key-difficulty is then to prove that, despite the slow decay of the microscopic distubances, the macroscopic disturbance remains bounded, motivating many of the previous references
on the topic. If the particles are sufficiently far one from the other then the macroscopic disturbance can be shown to be neglectible and we recover \cite{JabinOtto04}. 
While, if the particles are closer, it turns out that the macroscopic disturbance can be proved to be bounded only because of a backflow due to the fluid incompressibility. 
For instance,   in the case of particles on cubic lattices, Hasimoto mimicks the backflow  on the torus by imposing the constraint that the total fluid flow (after extending the fluid flow inside of the particles) vanishes. By Fourier analysis, he then explicitly computed the expansion \eqref{expansion.periodic} \cite{Hasimoto59}.

In this contribution, we show that the boundaries make the macroscopic disturbance converge: they induce naturally a normalization of the pressure that makes the backflow explicit and the microscopic disturbances due to each particle decay faster. This improves the simplicity of the analysis.

%
%
%
%
%

\subsection{Setting}

Let $\Omega \subset \R^3$ be of class ${\mathcal C}^2$ and contained in an infinite cylinder with an orientation $\xi$, \textit{i.e.},
\begin{align} \label{eq:Omega.bounded.orthogonal} \tag{H0}
\exists\, C_1 > 0,  \quad \text{ s.t.  } \quad  \Omega \subset \{ x \in \R^3 : \dist (x,{\text span}\{\xi\}) < C_1 \}  .
\end{align}
We point out that $\Omega$ might be bounded as well as unbounded.
For $N \in \N$ and $r > 0$, let
$R_N := N^{-1/3} r$ and
 $X^N_i \in \Omega$ such that $B^N_i := B_{R_N}(X^N_i) \Subset \Omega$
 and $\overline B^N_i \cap \overline B^N_j = \emptyset$ for all $1 \leq i \neq j \leq N$. We will write $R$, $X_i$ and $B_i$ instead of $R_N$, $X_i^N$ and $B_i^N$ in the following.    We assume throughout the paper that the distribution of particles is regular in the following sense. 
 Firstly, we have the following separation assumptions:
\begin{align} \label{eq:minimal.distance} \tag{H1}
	\exists \, c > 0 \quad  \min_{i \neq j} |X_i - X_j| \geq c N^{-1/3}, \qquad \min_{i=1,\ldots,N} \dist(X_i,\partial \Omega) \geq c N^{-1/3}.
\end{align} 
The key information here is that the constant $c$ does not depend on $N.$
Secondly, we assume that  the empirical measure 
 \begin{align} \label{eq:rho_N}
 	\rho_N = \frac 1 N  \sum_{i=1}^N \delta_{X_i}
\end{align} 
is close to a density $n \in \mathcal P(\Omega) \cap L^\infty(\Omega)$
where $\mathcal P(\Omega)$ denotes the space of probability measures on $\Omega$.  For this,  we impose the following control on the infinite Wasserstein distance:
 \begin{align} \label{eq:Wasserstein} \tag{H2}
 	\mathcal W_\infty (\rho_N, n) \leq C_0 N^{-1/3}.
 \end{align}
 Again, the key information here is that the constant $C_0$ is independent of the number of particles.  For simplicity, we assume that the cloud of particles is uniformly bounded, i.e.,
 \begin{align} \label{eq:K} 
 \exists \, K \Subset \overline \Omega,  ~ \forall \, i \in \{1,\ldots,N\}, ~ X_i \in K.
 \end{align}

Our goal in this paper is to derive information on the mean sedimentation velocity of the particles when they are submitted to a given force $F \in \R^3.$
Since we restrict to a linear Stokes problem,  we assume without restriction that $F$
is directed along the third vector $e_3$ of the canonical basis and we normalize its amplitude to $N^{-\frac 13}.$  In this way,  the Stokes velocity (cf. \eqref{eq:V.Stokes}) is independent of $N$, namely,
\begin{align} \label{V^St_r}
	V^{\St} = V_r^{\St}= \frac{1}{6 \pi r}.
\end{align}
We consider  then the problem 
\begin{align}
	\label{def:u_N}
	\left.
 \begin{aligned}
 	- \Delta u_N + \nabla p_N &= 0 && \text{in } \Omega \setminus \bigcup_{i=1}^N \overline {B_i}, \\
  \dv u_N &= 0 && \text{in } \Omega \setminus \bigcup_{i=1}^N \overline {B_i}, \\
  u_N &= 0 && \text{on } \partial \Omega,\\[6pt]
  u_N(x) &= V_i + \Omega_i \times (x - X_i) && \text {in } \overline{B_i} \quad \text{for all } 1 \leq i \leq N, \\
-  \int_{\partial B_i} \sigma[u_N, p_N] \nu &= N^{-\frac{1} 3} e_3 && \text{for all } 1 \leq i \leq N,\\
  - \int_{\partial B_i} (x-X_i)  \times \sigma[u_N, p_N] \nu   &= 0 && \text{for all } 1 \leq i \leq N, \\
 {\lim_{|x| \to \infty} u_N(x)} &{= 0}
 \end{aligned}
 \right\}
\end{align}
In this system, we recall that $\nu$ is the normal to $\partial B_i$ (directed inwards $B_i$). The symbol $\sigma$ stands for the fluid stress tensor given by Newton law:
\[
\sigma[u,p] = 2 D(u) - p \mathbb I_3  = (\nabla u + \nabla^{\top} u ) - p \mathbb I_3.
\]
Note that the first equation in \eqref{def:u_N} reads also:
\[
\dv (\sigma(u_N,p_N))= 0
\]
where the operator $\dv$ acts rowwise on the matrix ${\sigma}(u_N,p_N).$
The symbols $V_i$ and $\Omega_i$ stand respectively for the linear and angular velocities of particle $B_i.$ We emphasize that these velocities together
with $(u_N,p_N)$ are the unknowns in \eqref{def:u_N}.  The system is then algebraically well-posed,  the velocities $(V_i,\Omega_i)$ being the Lagrange multipliers of the two last equations in \eqref{def:u_N}.  In particular,  these velocities depend on $N$ but we skip the dependencies for legibility. 
The last condition in \eqref{def:u_N} is needed in the case when $\Omega$ is unbounded in order to rule out Poiseuille type flows. We will in the following not write this condition explicitly. We will only consider velocity fields in $\dot H^1(\Omega)$ though, and Poiseuille type flows are not contained in this space.

\medskip

We are interested in the average particle velocity 
\begin{align}
	\bar V_N := \frac 1 N \sum_{i=1}^N V_i.
\end{align}
for large $N$ under the assumption: 
\begin{align} \label{eq:gradient} \tag{Hom}
	\curl(n e_3) = \nabla n \times e_3 = 0.
\end{align}
This assumption is reminiscent of (HS).  We recall that, as mentioned in introduction,  if the limit density $n$ is not constant in the directions perpendicular to $e_3$ (namely, in case (MF)), the particles create a collective fluid velocity proportional to the number of particles $N$ and the magnitude of $\bar{V}_N$ scales differently in $N.$
The importance of this assumption can be observed as follows.  If the particles are small and their distribution dilute, the force acting on the particles is seen {reciprocally} by the fluid as a forcing term $f$ concentrated in the particles:
\[
f \sim  \sum_{i=1}^{N} 6\pi N^{-1/3} e_3 \delta_{X_i} \sim 6\pi N^{2/3} n e_3.
\]
For large $N$ we expect then that the leading term in the velocity-unknowns behaves like $N^{2/3} (u,p)$ with $(u,p)$ solution to 
\begin{equation} \label{eq_asymptoticu}
\left.
 \begin{aligned}
 	- \Delta u + \nabla p  &= 6\pi  n e_3  && \text{in } \Omega, \\
  \dv u &= 0 && \text{in } \Omega  \\
  u &= 0 && \text{on } \partial \Omega.
  \end{aligned}
  \right\}
\end{equation}
One may then  expect that the mean velocity $\bar{V}_N$ has magnitude  $N^{2/3}$ unless $u=0.$ In this latter case,  we must have that $ne_3$ is a gradient or equivalently that  \eqref{eq:gradient} holds true.
Even when \eqref{eq:gradient} holds true,  it will appear that the components of $\bar{V}_N$ have different magnitudes.  Below, we call sedimentation velocity the projection of $\bar{V}_N$ along $e_3$: 
\[
\bar{V}^{sed}_N = \bar{V}_N \cdot e_3.
\]

To end this subsection, we point out that \eqref{eq:gradient} together with \eqref{eq:Omega.bounded.orthogonal} entail that,  if the axis $\xi$ and the force $e_3$ are orthogonal, then $n$ is necessarily constant in the direction $\xi$ which contradicts that $n$ is a probability measure. This is not the situation that we are interested in here.

\subsection{Main results}

For $N$ fixed,  the system \eqref{def:u_N} is well posed \textit{via} the following construction. A classical framework is the space of extended
velocity-fields:
\begin{equation} \label{eq_defHN}
H_0[N] := \{ w \in H^1_0(\Omega) \text{ s.t.  ${\textrm div}\ w =0$  on $\Omega$ and } D(w) = 0 \text{ on $B_i^N$ for all $i$}\}
\end{equation}

We remind that,  since the $B_i$ are connected, for arbitrary $w \in H_0[N]$ there exists vectors $(W_1,\ldots,W_N)$ and vectors $(R_1,\ldots R_N)$ so that:
\[
w(x) = W_i + R_i \times (x-X_i)\,, \quad \forall \, x \in B_i .
\]
In particular,  an extended velocity-field $w \in H_0[N]$ encodes $u_N$ but also $(V_i,\Omega_i)_{i=1,\ldots,N}.$ Classically, we only need to compute these unknowns to solve our system since the pressure $p_N$ is then recovered as the Lagrange multiplier of the divergence-free constraint. 
Eventually,  we have the weak formulation of \eqref{def:u_N}:
\begin{center}
\begin{minipage}{.8\textwidth}
Find $u_N \in H_0[N]$ such that,
\[
\int_{\Omega} \nabla u_N : \nabla w = \sum_{i=1}^{N}
\frac{e_3 \cdot W_i}{N^{1/3}}\,, \quad \forall \, w \in H_0[N].
\]
\end{minipage}
\end{center}
Such a weak formulation is obtained by mutliplying formally the Stokes equation with $w$ and performing integration by parts to apply (pointwise and integral) boundary conditions on $u_N.$ From this weak formulation, we immediately deduce
\begin{align} \label{eq:energy.mean.velocity}
	\|\nabla u_N\|_{L^2(\Omega)}^2 = \sum_{i=1}^N \frac{e_3 \cdot V_i}{N^{1/3}} = N^{2/3} \bar{V}_N^{sed}.
\end{align}

{We see on this energy identity that there is a non-trivial relationship between $u_N$ and $\bar{V}_N.$ One could have expected that  the sedimentation velocity  $\bar{V}_N^{sed}$ is of the same order (with respect to $N$) as the fluid velocity $u_N$ itself. 
The energy identity, however, relates the sedimentation velocity  $\bar{V}_N^{sed}$ to the \emph{gradient} of the fluid velocity $u_N$ and reveals a factor $N^{2/3}$ between $\|\nabla u_N\|^2_{L^2(\Omega)}$ and $\bar{V}_N^{sed}.$} Our first main result is then the identification of the magnitude of $\bar{V}_N$ in both cases when \eqref{eq:gradient} holds true and does not hold true:

\begin{thm} \label{th:scaling.N}
Assume that  \eqref{eq:Omega.bounded.orthogonal}--\eqref{eq:Wasserstein}  are satisfied.

\begin{enumerate}[(i)]
\item \label{it:thm.unfavorable}  Assume that \eqref{eq:gradient} is \textit{not} satisfied.  Then, there exists $C$ depending only on  $\Omega$, on $n$  and on $C_0,c$, from \eqref{eq:Wasserstein} and \eqref{eq:minimal.distance} such that
	\begin{align}
	\limsup_{N \to \infty} N^{-\frac 2 3}|\bar V_N| \leq C  , \label{est:V.unfavorable}\\
	 \liminf_{N\to \infty}  N^{-\frac 23} \bar V_N^{sed} \geq \frac 1 C \label{est:V^sed.unfavorable}.
	\end{align}
	
\item \label{it:thm.favorable} If \eqref{eq:gradient} is satisfied then  there exists $C$ depending on  $\Omega$  and on $C_0,c$ such that
	\begin{align}
\limsup_{N \to \infty} N^{-\frac 13}	|\bar V_N| \leq C r^{-1/2} , \label{est:V.favorable}\\
	\limsup_{N \to \infty} |\bar V_N^{sed} -  V^{St}_r| \leq C.	\label{est:V^sed.favorable}
	\end{align}
\end{enumerate}
\end{thm}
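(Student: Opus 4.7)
The backbone is the energy identity \eqref{eq:energy.mean.velocity} coupled with the variational characterization of $u_N$ as the unique minimizer over $H_0[N]$ of
\begin{equation*}
\mathcal E_N(w) := \tfrac{1}{2}\|\nabla w\|_{L^2}^2 - \sum_{i=1}^N \frac{e_3 \cdot W_i(w)}{N^{1/3}}, \qquad W_i(w) := \frac{1}{|B_i|}\int_{B_i} w.
\end{equation*}
Since $\mathcal E_N(u_N) = -\tfrac{1}{2}N^{2/3}\bar V_N^{sed}$, Cauchy--Schwarz applied to the weak formulation yields the master lower bound: for every $\tilde u \in H_0[N]$,
\begin{equation*}
\bar V_N^{sed} \geq N^{2/3}\,\frac{\bigl(N^{-1}\sum_i e_3 \cdot W_i(\tilde u)\bigr)^{2}}{\|\nabla \tilde u\|_{L^2}^2}.
\end{equation*}
To pass from $\|\nabla u_N\|_{L^2}$ to $|\bar V_N|$ itself, the rigidity $V_i = |B_i|^{-1}\int_{B_i} u_N$ combined with H\"older, the embedding $\dot H^1_0(\Omega) \hookrightarrow L^6(\Omega)$ and $|\cup_i B_i| = N|B_i|$ give $|\bar V_N| \leq C r^{-1/2}\|\nabla u_N\|_{L^2}$.

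\textbf{Upper bounds on $\|\nabla u_N\|_{L^2}$.} By rigidity, the forcing rewrites for any $w \in H_0[N]$ as
\begin{equation*}
\sum_{i=1}^N \frac{e_3 \cdot W_i(w)}{N^{1/3}} = \frac{N^{-1/3}}{|B_1|}\int_\Omega w \cdot e_3\,\sum_{i=1}^N \mathbf{1}_{B_i}\,\mathrm d x \approx N^{2/3}\int_\Omega w \cdot e_3\, n\, \mathrm d x,
\end{equation*}
the approximation being quantified by \eqref{eq:Wasserstein} at order $O(N^{1/3}\|\nabla w\|_{L^2})$. In case (i), bounding the target integral by $L^{6/5}$--$L^6$ duality using $n \in L^\infty$ and Sobolev, then choosing $w = u_N$, would give $\|\nabla u_N\|_{L^2}^2 \leq C N^{4/3}$, hence \eqref{est:V.unfavorable}. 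In case (ii), \eqref{eq:gradient} yields $n e_3 = \nabla \phi$ for some $\phi \in H^1_{\mathrm{loc}}$ (whose well-posedness on the cylinder containing $\Omega$ uses \eqref{eq:Omega.bounded.orthogonal}); since $u_N$ is divergence-free with $u_N|_{\partial \Omega} = 0$, the leading-order term $\int u_N \cdot \nabla \phi$ vanishes, and only the $O(N^{1/3}\|\nabla u_N\|_{L^2})$ discretization error survives, giving $\|\nabla u_N\|_{L^2}^2 \leq C N^{2/3}$. This yields the upper half of \eqref{est:V^sed.favorable} and, via the Sobolev step, \eqref{est:V.favorable}.

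\textbf{Matching lower bounds.} For \eqref{est:V^sed.unfavorable}, let $u$ solve \eqref{eq_asymptoticu}; since \eqref{eq:gradient} fails, $\|\nabla u\|_{L^2}^2 = 6\pi\int u \cdot e_3\,n > 0$. I would build a competitor $\tilde u \in H_0[N]$ by replacing $u$ on each $B_i$ by the rigid motion interpolating its centre value, then restoring the divergence constraint with a Bogovskii-type corrector supported in a disjoint annulus of thickness $\sim N^{-1/3}$ around each particle (disjointness guaranteed by \eqref{eq:minimal.distance}). One would then verify $\|\nabla \tilde u\|_{L^2}^2 = \|\nabla u\|_{L^2}^2 + o(1)$ and $N^{-1}\sum_i e_3 \cdot W_i(\tilde u) = \int u \cdot e_3\, n + o(1) > 0$, so the master inequality delivers \eqref{est:V^sed.unfavorable}. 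For the lower half of \eqref{est:V^sed.favorable}, $\tilde u$ would instead be a sum of truncated exterior Stokes flows around each $B_i$ prescribing translation $V^{St}_r e_3$; the gradient structure \eqref{eq:gradient} is exactly what allows a macroscopic pressure $\nabla \phi$ to absorb the otherwise divergent Stokeslet sum, so that $\|\nabla \tilde u\|_{L^2}^2 = N^{2/3} V^{St}_r + O(1)$ and $N^{-1}\sum_i e_3 \cdot W_i(\tilde u) = V^{St}_r$. The master inequality then produces $\bar V_N^{sed} \geq V^{St}_r - O(1)$, matching the upper bound.

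\textbf{Main obstacle.} The technical core is the case (ii) Stokeslet construction: individual Stokeslets centred at $X_i$ decay as $1/|x|$ and their sum naively produces a macroscopic disturbance of size $O(N^{2/3})$ with formal energy $O(N^{4/3})$; only the boundary-adapted pressure $\nabla \phi$ provided by the gradient structure of $n e_3$ brings the energy down to the correct $N^{2/3}$-scale. Making this rigorous calls simultaneously on all three hypotheses: \eqref{eq:Omega.bounded.orthogonal} for the solvability of the macroscopic pressure problem on an unbounded cylinder, \eqref{eq:minimal.distance} for building local Bogovskii correctors compatible with the rigid-motion constraint without inflating $H^1$-norms, and \eqref{eq:Wasserstein} for the quantitative discretization estimate relating empirical and limiting measures.
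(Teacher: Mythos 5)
Your variational framework (competitor test functions in $H_0[N]$ plus the Cauchy--Schwarz ``master lower bound'') is a genuinely different and legitimate route for the \emph{lower} bounds: the competitor built from the solution of \eqref{eq_asymptoticu} gives \eqref{est:V^sed.unfavorable}, and the truncated exterior Stokes flows with prescribed translation $V^{\St}_r e_3$ do give $\bar V_N^{sed} \geq V^{\St}_r - C$, since truncation at half the interparticle distance makes the supports disjoint (no divergent Stokeslet sum needs absorbing) and costs only $O(N^{-1/3})$ energy per particle. The paper instead proceeds by an explicit decomposition $u_N = v_{N,1}+v_{N,2}+v_{N,3}+w_N$ and computes limits rather than one-sided bounds. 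However, there are two genuine gaps.

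First, the upper half of \eqref{est:V^sed.favorable} is not established. Your duality argument, even if correct, can only produce $\|\nabla u_N\|_{L^2}^2 \leq C\,r^{-1}N^{2/3}$ with an unspecified constant, hence $\bar V_N^{sed} \leq C r^{-1}$; the theorem asserts $\bar V_N^{sed} \leq V^{\St}_r + C = \frac{1}{6\pi r} + C$ with $C$ independent of $r$, i.e.\ it pins the coefficient of $r^{-1}$ to exactly $\frac{1}{6\pi}$. Note that your stated conclusion $\|\nabla u_N\|_{L^2}^2 \leq C N^{2/3}$ with an $r$-independent $C$ would give $\bar V_N^{sed}\leq C$, contradicting your own lower bound $\bar V_N^{sed}\geq V^{\St}_r - C \to \infty$ as $r\to 0$. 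Extracting the sharp constant requires isolating the single-particle self-interaction (the exterior Stokes flow equals exactly $V^{\St}_r$ on $B_i$ and contributes $N^{-1/3}V^{\St}_r$ to the energy) and proving that the pair interactions contribute only $O(N^{2/3})$; the paper does this through $v_{N,1}=\sum_i U_i$ with $U_i$ driven by $\delta_i^R - |Q_i|^{-1}\1_{Q_i}$, whose vanishing zeroth and first moments yield the $N^{-2}|X_i-X_j|^{-5}$ off-diagonal decay. Nothing in your proposal replaces this computation.

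Second, your discretization error ``$O(N^{1/3}\|\nabla w\|_{L^2})$'' is too optimistic: \eqref{eq:Wasserstein} controls the $(W^{1,2})^*$ distance only between measures with bounded densities, and passing from the measure concentrated on the balls $B_i$ (volume fraction $r^3$) to the smooth $\sigma_N$ costs a factor $r^{-(3/p-1)_+}=r^{-1/2}$ at $p=2$ (Proposition~\ref{pro:Poincare}). This is precisely where the $r$-dependence enters, and it propagates: your Sobolev step then yields $|\bar V_N|\leq Cr^{-1}N^{1/3}$ rather than the claimed $Cr^{-1/2}N^{1/3}$ in \eqref{est:V.favorable}, and an $r^{-1/2}$ loss in \eqref{est:V.unfavorable}, whose constant is required to depend only on $\Omega$, $n$, $C_0$, $c$. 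The paper avoids these losses by testing $\bar\rho_N$ against the regular auxiliary field $v_N$ (which converges after rescaling) and reserving the crude comparison for the small remainder $w_N$ only.
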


 We remark that the factor $r$ is related to the volume fraction $\phi$ through $\phi \sim r^3.$ For instance,  if $n$ is the indicator of some connected open set $K \Subset \Omega$ with    $|K|=1$ (say a unit cube for instance), we can compute a local volume fraction $\phi = 4\pi r^3/3.$   We recall also that $V^{\St}_r \sim r^{-1}$.   In particular, since $r$ is independent of $N$ we have $N^{2/3}  \gg   V^{St}$ for $N \gg 1$, and therefore,  in case \eqref{eq:gradient} is not satisfied, \eqref{est:V^sed.unfavorable} is coherent with \eqref{eq:JabinOtto}. 
In case \eqref{eq:gradient} holds true,  \eqref{est:V^sed.favorable} is a prerequisite in order that an expansion \eqref{expansion.periodic} can be valid. 

If \eqref{eq:gradient} holds true,  the solution to \eqref{eq_asymptoticu} is a pure pressure.  With similar arguments as previously,  a more relevant approximation to $(u_N,p_N)$ for large $N$ is then  $N^{2/3}(\tilde{u},\tilde{p})$ where 
$(\tilde{u},\tilde{p})$ is the solution to:
\begin{equation} \label{eq_asymptoticutilde} \left.
 \begin{aligned}
 	- \Delta \tilde{u} + \nabla \tilde{p}  &=  6\pi (\rho_N - n) e_3  && \text{in } \Omega, \\
  \dv \tilde{u }&= 0 && \text{in } \Omega  \\
  \tilde{u} &= 0 && \text{on } \partial \Omega.
  \end{aligned} \right\}
\end{equation}
According to the rate of convergence \eqref{eq:Wasserstein}, one may then expect that the mean velocity $\bar{V}_N$ is of size $N^{1/3}.$ The even smaller size of the sedimentation velocity (in powers of $N$) comes from the remark that:
\[
\bar{V}_N^{sed}  = \bar{V}_N \cdot e_3 \sim \langle N^{2/3} \tilde{u} ,  \rho_N e_3 \rangle 
 = N^{2/3}\langle \tilde{u} , ( \rho_N  - n) e_3 \rangle 
\]
We used here again that, under assumption \eqref{eq:gradient},  the term
$n e_3$ is a pressure gradient.  The further gain of $N^{1/3}$ then yields from 
\eqref{eq:Wasserstein}  again.  This gain can be generalized to the component of $\bar{V}_N$ along any vector $e \in \mathbb S^2$ such that $\nabla n \times e=0.$

\medskip 

In order to derive and characterize an expansion of the form \eqref{expansion.periodic}, we introduce the two following additional structural assumptions.
The first assumption regards a refined convergence of $\rho_N$ to $n$. To this end, we first smooth out the density $\rho_N$ as follows
\begin{align} \label{eq:sigma_N}
	 \sigma_N := \frac 1 N \sum_{i=1}^N \frac 1 {|Q_i|} \1_{Q_i}\,, \qquad   \bar{\rho}_N = \dfrac{1}{N}\sum_{i=1}^N \dfrac{1}{|\partial B_i|} \mathcal H^{2}_{\partial B_i} .
\end{align}
Here  $\mathcal H^2_{\partial B_i}$ is the Hausdorff measure on $\partial B_i$ while  the $Q_i$ are disjoint cubes centered at $X_i$ of volume
\begin{align} \label{eq:Q_i}
 \frac{1}{C_1 N} \leq |Q_i| \leq \frac {C_1} {N}.
 \end{align} 
with $C_1$ independent of $N.$
 We emphasize that it is always possible  to find such cubes thanks to assumption \eqref{eq:minimal.distance} with
$C_1 = c^{-3}$.  However,  the $Q_i$ are not unique and we might change construction depending on the computations.
To characterize defects of $\rho_N$ to $n$, we impose that for a suitable choice of the cubes $Q_i$, the following strong convergence holds:
\begin{align} \label{ass:Strong.concergence} \tag{Str}
	N^{\frac 1 3}(\sigma_N - n) \to g \quad \text{in } H^{-1}(\Omega)   \qquad \text{for some } g \in H^{-1}(\Omega).
\end{align}
 We remark that by Proposition \ref{pro:Poincare} below $N^{\frac 1 3}(\sigma_N - n)$ is already bounded in $\dot H^{-1}(\Omega)$ under assumption \eqref{eq:Wasserstein}--\eqref{eq:minimal.distance}.

\medskip

The second assumption is an almost periodicity assumption on the particles.:
\begin{multline} \label{ass:Periodic} 
	 \exists \, d>0,  \; t_N \in \R^3,  \; E_N \subset \Omega, \; I_N \subset \{1, \dots, N \} \text{ s.t.}\notag \\
	 \left\{
	\begin{aligned} & |t_N|_\infty \leq N^{-1/3},  E_N \subset E_{N+1}, \frac {|I_N|}{N} \to 1,  \\
	& \{X_i : i \in I_N\} =  E_N \cap (t_N + d N^{-\frac 1 3 }\Z)^3, \\
	 & E_N = \bigcup_{i \in I_N} X_i + [-N^{-1/3}d,-N^{-1/3}d]^3 
	 \end{aligned} \right. \tag{Per}
\end{multline}
Here $E_N$ should be understood as the set on which the configuration is periodic, $I_N$ the set of particles which are periodically distributed and $t_N$ allows those particles to be uniformly translated with respect to a lattice centered at the origin.
We will give an example for a particle configuration that satisfies both \eqref{ass:Periodic} and \eqref{ass:Strong.concergence} with a nontrivial $g$ in Section \ref{sec:Str}.

\medskip

To give a characterization of the mean velocity, we introduce the following velocity fields.
We define $v_{N,1} \in \dot H^1(\R^3)$ as the solution of the following Stokes equations in the whole space $\R^3:$
\begin{align}
\left.
\begin{aligned}
	- \Delta v_{N,1} + \nabla p_{N,1} &= \sum_{i=1}^N \left(\frac 1 {|\partial B_R(X_i)|} \mathcal H^2|_{\partial B_R(X_i)} - \frac 1 {|Q_i|} \1_{Q_i} \right) e_3, \\
	\dv v_{N,1} &= 0.
\end{aligned} \right\}
\end{align}
Moreover, we consider the solution $v_{\infty,3} \in H^1_0(\Omega)$ to the Stokes equations in $\Omega$
\begin{align} \label{eq:v_infty,3} \left.
\begin{aligned}
 	- \Delta v_{\infty,3} +  \nabla p_{\infty,3} = g, 
 	\quad \dv v_{\infty, 3} &= 0 \qquad \text{in } \Omega, \\
 	  v_{\infty,3} &= 0 \qquad \text{on } \partial \Omega.
\end{aligned} \right\}
\end{align}
We keep the index $2$ for a further velocity-fields that we require for technical 
convenience below.  With these definitions, our expansion is the content of the following result:
\begin{thm} \label{th:expansion}
Assume that assumption \eqref{eq:Omega.bounded.orthogonal}--\eqref{eq:Wasserstein}  and \eqref{eq:gradient} are satisfied.

\begin{enumerate}[(i)]
\item \label{it:thm.strucutre} If in addition \eqref{ass:Strong.concergence} is satisfied, then, for all  $\delta > 0$, there exists $C > 0$, depending only on $C_0,C_1,c,\delta$ from \eqref{eq:Wasserstein}, \eqref{eq:minimal.distance} and \eqref{eq:minimal.distance} respectively such that
\begin{align} \label{structure}
	\limsup_{N \to \infty}\left| \bar V_N^{sed}  -   \left( N^{-2/3}  \|\nabla v_{N,1}\|_{L^2(\R^3)}^2 + \|\nabla v_{\infty,3}\|_{L^2(\Omega)}^2\right) \right| \leq C  r^{1 - \delta}.
\end{align}
{Moreover, 
\begin{align} \label{v_3,infty}
    \|\nabla v_{\infty,3}\|_{L^2(\Omega)}^2 = \lim_{N \to \infty} N^{1/3}  \langle v_{\infty,3}, \bar \rho_N e_3\rangle.
\end{align}
and there exists a sequence $w_N \in  H^1_0(\Omega)$ with $\|\nabla w_N\|_{L^2(\Omega)} \leq C N^{1/3} r^{3/2 - \delta}$ such that 
\begin{align} \label{weak.v_infty.3}
N^{-1/3} (u_N -w_N) \wto v_{\infty,3} \quad \text{weakly in }  H^1_0(\Omega).
\end{align}
}
\item \label{it:thm.periodic} If in addition \eqref{ass:Periodic} is satisfied {and $Q_i = X_i + [-N^{-1/3}d,N^{-1/3}d]^3$ for all $i \in I_N$,} then
\begin{align} \label{char.periodic}
	\lim_{N \to \infty}  N^{-2/3} \|\nabla v_{N,1}\|_{L^2(\R^3)}^2 = \|\nabla v_{per}\|_{L^2(\mathbb T_d^3)}^2 =  V^{\St}_r(1 - a_{per} \frac r d + o(r)),
\end{align}
for some constant $a_{per} > 0$ and where $v_{per}$ is the unique solution to 
\begin{equation} \label{eq_Stokesperiodique} \left.
\begin{aligned}
 	- \Delta v_{per} +  \nabla p_{per} = 
 	\left(\frac 1 {|\partial B_{r}|} \mathcal H^2|_{\partial B_{ r }(0)} - \1_{\mathbb T_d^3} \right) F &\qquad \text{in } \mathbb T_d^3, \\
 	\quad \dv v_{per} = 0 &\qquad \text{in } \mathbb T_d^3, \\
 	\int_{\mathbb T_d^3} v_{per} \dd x &= 0,
\end{aligned} \right\}
\end{equation} 
where $\mathbb T_d^3 = \R^3/(d \Z)^3$.
\end{enumerate}
\end{thm}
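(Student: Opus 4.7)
The plan is to exploit the energy identity \eqref{eq:energy.mean.velocity}, which reduces the claim to a sharp asymptotic analysis of $\|\nabla u_N\|_{L^2(\Omega)}^2/N^{2/3}$. I would construct a two-scale ansatz
\[
\tilde u_N := v_{N,1} + v_{N,2} + N^{1/3} v_{\infty,3}
\]
where $v_{N,1}$ (already defined in the statement) encodes the microscopic disturbance created by each particle together with the ``cube backflow'' that replaces the Hasimoto normalization, $v_{N,2}$ is a small $H^1$-correction designed to make $\tilde u_N$ admissible in $H_0[N]$ (adjusting for the fact that $v_{N,1}$ is neither rigid inside the $B_i$ nor zero on $\partial \Omega$, and that $v_{\infty,3}$ is not rigid inside the $B_i$), and $v_{\infty,3}$ captures the macroscopic contribution coming from the defect $N^{1/3}(\sigma_N - n)\to g$. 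The scaling $N^{1/3}$ in front of $v_{\infty,3}$ is forced by the right-hand side of \eqref{eq_asymptoticutilde} combined with the rate \eqref{eq:Wasserstein}.

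First I would verify that $v_{N,1}$ decays sufficiently fast away from $\supp \rho_N$. Indeed, since each summand in its source has zero mean inside the cube $Q_i$, Green's function estimates upgrade the Stokeslet decay from $|x-X_i|^{-1}$ to $|x-X_i|^{-3}$ at mesoscopic range, so summing gives uniform bounds and in particular $N^{-1/3}\|\nabla v_{N,1}\|_{L^2}^2 \lesssim V^{\St}_r$. A direct single-particle computation on each cube (yielding the Stokes drag, up to $O(r)$) then gives the expected magnitude. Next, I would use $\tilde u_N$ as a test function in the weak formulation together with $u_N$ itself, leading to the identity
\[
\|\nabla u_N\|_{L^2(\Omega)}^2 = \int_\Omega \nabla u_N : \nabla \tilde u_N + \int_\Omega \nabla u_N : \nabla(u_N - \tilde u_N).
\]
The first term is then computed using the equations satisfied by $v_{N,1}$ and $v_{\infty,3}$: the microscopic contribution reproduces exactly $\|\nabla v_{N,1}\|_{L^2(\R^3)}^2$ (because $u_N$ has the same rigid value inside each $B_i$), the macroscopic contribution produces $N^{1/3}\langle u_N, g \rangle$, and after rescaling and passing to the limit using \eqref{ass:Strong.concergence} this gives exactly $N^{2/3}\|\nabla v_{\infty,3}\|_{L^2(\Omega)}^2$, together with the characterisations \eqref{v_3,infty} and \eqref{weak.v_infty.3} (the latter from an Aubin--Lions argument combined with the explicit $\dot H^{-1}$ bounds of Proposition \ref{pro:Poincare}).

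The hard part will be controlling cross terms and the corrector $v_{N,2}$ at the requested precision $r^{1-\delta}$. The cross contribution $\int \nabla v_{N,1}:\nabla(N^{1/3} v_{\infty,3})$ must be shown to be $o(N^{2/3})$; this is where the hypothesis \eqref{eq:gradient} plays its role, because only under \eqref{eq:gradient} does $v_{\infty,3}$ receive no leading order forcing from $ne_3$ itself, so that its interaction with the zero-mean microscopic source integrates out to lower order. To build $v_{N,2}$ I would use a Bogovskii-type construction on each annulus $B_{2R}(X_i)\setminus B_i$ together with a cut-off near $\partial\Omega$, and estimate its $H^1$-norm by $O(N^{1/3} r^{3/2-\delta})$, which is precisely the threshold accounting for the $r^{1-\delta}$ error after division by $N^{2/3}$. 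Finally, the remainder $\|\nabla(u_N-\tilde u_N)\|_{L^2}$ is bounded by testing the Stokes equation of $u_N-\tilde u_N$ against itself in $H_0[N]$ and using the previous estimates; the standard uniqueness argument for variational inequalities with rigidity constraint closes the energy balance.

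For part \eqref{it:thm.periodic}, under \eqref{ass:Periodic} all but $o(N)$ particles lie on a translated cubic lattice of spacing $d N^{-1/3}$ inside $E_N$. Translation invariance and unique solvability of \eqref{eq_Stokesperiodique} identify $v_{N,1}$, restricted to any cell inside $E_N$, with a rescaling of the unique periodic solution $v_{per}$ on $\mathbb T_d^3$; the contribution of the $|I_N^c|=o(N)$ non-periodic cells is controlled by the uniform bounds of step one and goes to zero after normalisation by $N^{2/3}$. This yields the first equality in \eqref{char.periodic}. The expansion $\|\nabla v_{per}\|_{L^2(\mathbb T_d^3)}^2 = V^{\St}_r(1 - a_{per} r/d + o(r))$ as $r/d\to 0$ is Hasimoto's classical Fourier computation: at $r=0$ the solution is a single Stokeslet on $\mathbb T_d^3$ giving the leading $V^{\St}_r$, while the $O(r)$ correction comes from the regular part of the periodic Green function evaluated at the origin, whose Fourier series converges and produces a positive constant $a_{per}$.
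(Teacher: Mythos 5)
Your proposal follows essentially the same route as the paper: the same decomposition of $u_N$ into the whole-space microscopic field $v_{N,1}$ (Stokeslets regularized by the cube backflow $\sigma_N$), a boundary/rigidity corrector, the macroscopic field driven by the defect $N^{1/3}(\sigma_N-n)\to g$, and a variationally controlled remainder of size $N^{1/3}r^{3/2-\delta}$, combined with the energy/duality identity and, for part (ii), the identification of $v_{N,1}$ on $E_N$ with Hasimoto's periodic cell problem. The only organizational difference is that the paper inserts an intermediate field $v_{N,3}$ (solving the Stokes problem with source $N^{2/3}(\sigma_N-n)e_3$) and proves $N^{-1/3}v_{N,3}\to v_{\infty,3}$ strongly, rather than placing $N^{1/3}v_{\infty,3}$ directly in the ansatz; also note that the vanishing of the cross term with $v_{N,1}$ comes from the weak convergence $N^{-1/3}v_{N,1}\rightharpoonup 0$ in $\dot H^1$ paired against the fixed $\nabla v_{\infty,3}$, rather than from \eqref{eq:gradient} as you suggest.
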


{A few remarks are in order. We first recall, in order to compare with the expansions for $\bar V^{sed}_N$  discussed at the beginning of the introduction, that  $r \sim \phi^{1/3}$.
The estimate \eqref{structure} characterizes the sedimentation velocity $\bar V_N^{sed}$ up to an $O(r^{1-\delta})$ error as the sum of two contributions. The first contribution, encoded in $v_{N,1}$, only depends on the particle configuration. It is completely independent of the container $\Omega$. Under the periodicity assumption \eqref{ass:Periodic}, we characterize this contribution  in \eqref{char.periodic} as the sum of the Stokes velocity  $V^{\St}_r$ and a correction of order $r V^{\St}_r$ that can be computed from the problem on the torus \eqref{eq_Stokesperiodique}. Recall from \eqref{V^St_r} that $r |V^{\St}_r| = O(1)$.

The second contribution to $\bar V_N^{sed}$ in \eqref{structure} is encoded in $v_{\infty,3}$. Note that $v_{\infty,3}$ is independent of $r$ and therefore the contribution $\|\nabla v_{\infty,3}\|_{L^2(\Omega)}$ is of order $1$ if $\curl g \neq 0$.
Moreover, the characterization \eqref{weak.v_infty.3} means that $v_{\infty,3}$ is the leading order normalized macroscopic fluid flow and by \eqref{v_3,infty} the contribution $\|\nabla v_{\infty,3}\|_{L^2(\Omega)}$ equals  the average of this leading order macroscopic fluid flow in the particles.
Note that even though the macroscopic fluid flow is of order $N^{1/3}$, \eqref{v_3,infty} implies that its average at the particles is of order $1$.
}

We also remark that the constant $a_{per}$ corresponds to the one from \eqref{expansion.periodic} analyzed in \cite{Hasimoto59}. We do not investigate further the computation of  $ \|\nabla v_{N,1}\|_{L^2(\R^3)}^2$ for particle configurations other than those satisfying \eqref{ass:Periodic}. One could expect though that the energy $\|\nabla v_{N,1}\|_{L^2(\R^3)}^2$ can be generally expressed in terms of the $2$-point correlation, similar as for the second order correction of the effective viscosity of a suspension obtained in \cite{Gerard-VaretHillairet19, DuerinckxGloria20}.

{
\subsection{Organization of the remainder of the paper and notations}

The remainder of the paper is devoted to the proofs of Theorem \ref{th:scaling.N} and \ref{th:expansion}.

Section \ref{sec:densities}, contains preliminary investigations on the probability densities involved in the analysis, namely the empirical measure of the particles smeared out to $\partial B_i$, the measure $\sigma_N$ and the limit density $n$. Section \ref{sec:Poincare} contains estimates between these densities which will be crucial for the subsequent analysis. In Section \ref{sec:Str}, we provide an example for assumption \eqref{ass:Strong.concergence} with a nontrivial function $g$.

In Section \ref{sec:favorable} we prove Theorem \ref{th:scaling.N} \ref{it:thm.favorable} as well as Theorem \ref{th:expansion} \ref{it:thm.strucutre}. The proof is based on the splitting of $u_N$ into $v_{N,1}, v_{N,2}, v_{N,3}$ and $w_N$ that account for a whole space solution, boundary corrections, the defect between the measures $\sigma_N$ and $n$ as well as higher order hydrodynamical interactions between the particles.

In Section \ref{sec:periodic} we show Theorem \ref{th:expansion} \ref{it:thm.periodic} by analyzing periodic particle configurations.

Finally, in Section \ref{sec:unfavorable}, we give the proof of Theorem  \ref{th:scaling.N} \ref{it:thm.unfavorable} that concerns the mean particle velocity in the ill-prepared case, when \eqref{eq:gradient} is not satisfied. We complement the proof by additional structural information, namely the strong convergence $v_N \to v_\ast$ in $H^1_0(\Omega)$ of the leading part $v_N$ of $u_N$  in terms of the particle volume fraction $r^3$ as well as a characterization of the leading order of the limiting behavior of the mean velocity $\bar V_N$ in terms of $v_\ast$.}

\medskip

In what follows, we use classical notations for function spaces.  We do not specify whether we handle vector or scalar functions. This shall be clear in the context.  If $U \subset \R^3$ is bounded,
we denote 
\[
\fint_{U} f(x) {\text d}x = \dfrac{1}{|U|} \int_{U} f(x){\text d}x \quad 
\forall \,  f \in L^p(U),
\]
and $L^p_0(U)$ the subset of $L^p(U)$ containing mean-free functions. 
Such definitions may be generalized to functions defined on hypersurface of $\mathbb R^3.$
Finally, for  arbitrary $U \subset \mathbb R^3, $ we denote 
\[
\dot{H}^1(U) = \{ u \in L^6(U)  \text{ s.t. } \nabla u \in L^2(U)\}. 
\] 
If $U$ is bounded we have $\dot{H}^1(U) = H^1(U)$ that we endow with the classical norm.
If $U$ is unbounded we endow $\dot{H}^1(U)$ with the norm 
\[
\|u\|_{\dot{H}^1(U)} = \|\nabla u\|_{L^2(U)}
\]
for which it is also a Hilbert space.

Below we use also constantly the symbol $\lesssim$ for an inequality involving a harmless 
(multiplying) constant.




\section{Properties of (smoothened-)empirical measures} \label{sec:densities}
In our problem, particle distributions are encoded:
\begin{itemize}
\item {\em via} the associated empirical measures $\rho_N$ at the discrete level,
\item {\em via} the density $n$ in the continuous model. 
\end{itemize}
For technical convenience, we need in the sequel smoothened versions of $\rho_N.$ Namely, we will use:
\begin{align} \label{def.sigma_N}
    \sigma_{N} = \dfrac{1}{N} \sum_{i=1}^N \dfrac{1}{|Q_i|}\mathds{1}_{Q_i} \qquad \bar{\rho}_N = \dfrac{1}{N}\sum_{i=1}^N \dfrac{1}{|\partial B_i|} \mathcal H^{2}_{\partial B_i}  
\end{align}
where we recall that $Q_i$ are cubes centered in the $X_i$ of volume scaling like $1/N$ (see assumption \eqref{eq:Q_i}) while $\mathcal H^2_{\partial B_i}$ is the Hausdorff measure on $\partial B_i.$ 
In this section we prove at first some preliminary Poincaré type estimates that are crucial for the later analysis.
These inequalities enable to control distances between smoothened empirical measures and between empirical measures and their continuous conterparts.  We provide then examples of particle distributions for which
assumption \eqref{ass:Strong.concergence} holds true with an explicit $g.$

\medskip

\subsection{Poincaré type inequalities} 
\label{sec:Poincare}
 
The first purpose of this section is the following estimates regarding particle distributions:

\begin{prop} \label{pro:Poincare}
Let $p \in (1,\infty)$ and assume that $B_i \subset Q_i$ for all $i.$
%
\begin{enumerate}[(i)]
\item \label{it:rhoN.sigma_N}
If $p \neq 3$, there exists a constant $C$ that depends only on $p$ and the constant $C_1$ from \eqref{eq:Q_i} such that
\begin{align} \label{est:rhoN.sigma_N}
\|\bar \rho_N - \sigma_N\|_{(W^{1,p}(\Omega))^*} \leq C r^{-(3/p-1)_+} N^{- \frac 13}, 
\end{align}
and, if $p > 3/2$,
\begin{align}
\|\bar \rho_N -  \sigma_N\|_{(W^{2,p}(\Omega))^*} \leq C  N^{-2/3}, \label{est:rhoN.sigma_N.2}
\end{align}
where $(\cdot)_+$ stands for the positive part of real numbers.

\item  \label{it:n.sigma_N}
If $p \neq 3$  there exists a constant $C$ that depends only on $p$ and the constants $C_0$, $c$ and $C_1$ from \eqref{eq:Wasserstein}, \eqref{eq:minimal.distance} and \eqref{eq:Q_i} such that
\begin{align} \label{est:n.sigma_N}
\|\sigma_N - n\|_{(W^{1,p}(\Omega))^*} \leq C N^{-1/3},
\end{align}

\item \label{it:n.bar.rho_N}
If $p \neq 3$ there exists a constant $C$ that depends only on $p$,  $n$ and the constants  $C_0$ and $c$ from \eqref{eq:Wasserstein} and 
\eqref{eq:minimal.distance} such that
\begin{align} \label{est:n.bar.rho_N}
\|\bar \rho_N - n\|_{(W^{1,p}(\Omega))^*} \leq C r^{-{(3/p-1)_+}} N^{-1/3},
\end{align}
\end{enumerate}

\end{prop}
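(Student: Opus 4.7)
The plan is to write each measure difference as $N^{-1}\sum_{i=1}^N \mu_i$ with every local contribution $\mu_i$ supported in an enlarged cell $\tilde Q_i := B(X_i, CN^{-1/3})$ and having zero total mass, and to bound $|\int \varphi \, \mathrm{d}\mu_i|$ via a Poincaré--Sobolev estimate on $\tilde Q_i$. Assumption \eqref{eq:minimal.distance} guarantees bounded overlap of the cells $\tilde Q_i$, so that discrete Hölder converts $\sum_i \|\nabla\varphi\|_{L^p(\tilde Q_i)}$ into $N^{1-1/p}\|\nabla\varphi\|_{L^p(\Omega)}$. The crucial cancellation beyond zero mass comes from reflection symmetry of $\partial B_i$ and $Q_i$ about $X_i$: both $\mathcal{H}^2_{\partial B_i}/|\partial B_i| - \mathds{1}_{Q_i}/|Q_i|$ (used in (i)) and $\mathds{1}_{Q_i}/|Q_i| - \delta_{X_i}$ (used implicitly in (ii)) also have vanishing first moment, which enables Taylor expansions to second order.

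For part \eqref{it:rhoN.sigma_N}, I would split $\fint_{\partial B_i}\varphi - \fint_{Q_i}\varphi$ through $\fint_{B_i}\varphi$. The sphere-to-ball piece is bounded by $CR_N^{1-3/p}\|\nabla\varphi\|_{L^p(B_i)}$ via the scaled Poincaré-trace inequality on $B_i$. The ball-to-cube piece is the source of the $r$-dependence: a naive Hölder on $B_i\subset Q_i$ yields $r^{-3/p}$, one power of $r$ short. The correct exponent is obtained by first invoking the scale-invariant Sobolev embedding $W^{1,p}(Q_i)\hookrightarrow L^{p^\ast}(Q_i)$ (for $p<3$, with $p^\ast = 3p/(3-p)$) and only then applying Hölder on $B_i$: since $3/p^\ast = 3/p - 1$, the factor $r^{-3/p}$ is traded for $r^{-(3/p-1)}$. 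For $p>3$, Morrey's embedding eliminates the $r$-dependence altogether. Summing gives \eqref{est:rhoN.sigma_N}. For the improved rate \eqref{est:rhoN.sigma_N.2}, the vanishing first moment permits to subtract an affine polynomial $L_i\varphi$ from $\varphi$; the scaled Sobolev--Poincaré estimate $\|\varphi - L_i\varphi\|_{L^\infty(\tilde Q_i)}\leq CN^{-(2-3/p)/3}\|\nabla^2\varphi\|_{L^p(\tilde Q_i)}$ (valid for $p>3/2$, since then $W^{2,p}\hookrightarrow L^\infty$) yields the $N^{-2/3}$ rate upon summation.

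For part \eqref{it:n.sigma_N}, the Wasserstein assumption \eqref{eq:Wasserstein} produces a transport map $T$ with $T_\#n = \rho_N$ and $\|T - \mathrm{id}\|_\infty \leq C_0 N^{-1/3}$, hence a measurable partition $A_i := T^{-1}(\{X_i\})$ with $n(A_i) = 1/N$ and $A_i \subset \tilde Q_i$. Writing
\[
\int \varphi \, \mathrm{d}(\sigma_N - n) = \frac{1}{N}\sum_{i=1}^N \Bigl(\fint_{Q_i} \varphi - \frac{1}{n(A_i)}\int_{A_i}\varphi \, \mathrm{d}n\Bigr),
\]
I would compare both averages to $\fint_{\tilde Q_i}\varphi$ via Poincaré--Sobolev on $\tilde Q_i$; the $\mathrm{d}n$-average additionally requires $|A_i|\geq (N\|n\|_\infty)^{-1}$, coming from $n\in L^\infty$. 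Summation yields $CN^{-1/3}\|\nabla\varphi\|_{L^p(\Omega)}$. Part \eqref{it:n.bar.rho_N} then follows from parts \eqref{it:rhoN.sigma_N} and \eqref{it:n.sigma_N} by the triangle inequality.

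The main technical hurdle is identifying, in part \eqref{it:rhoN.sigma_N}, that Sobolev rather than Morrey or Hölder embedding is the right tool to bridge the discrepancy between the ball-radius scale $R_N = rN^{-1/3}$ and the cube-side scale $\sim N^{-1/3}$; the critical case $p=3$ must be excluded precisely because the embedding $W^{1,p}\hookrightarrow L^{p^\ast}$ degenerates there.
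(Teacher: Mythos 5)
Your proposal is correct and, for items (i) and (iii), follows essentially the same route as the paper: the splitting of $\fint_{\partial B_i}\varphi-\fint_{Q_i}\varphi$ through the ball average $\fint_{B_i}\varphi$ (i.e.\ through the intermediate measure $\frac1N\sum_i|B_i|^{-1}\1_{B_i}$), the scaled Poincar\'e--trace inequality on $B_i$ for the sphere-to-ball piece, the scale-invariant embedding $W^{1,p}(Q_i)\hookrightarrow L^{p^*}(Q_i)$ (resp.\ Morrey for $p>3$) followed by H\"older on $B_i$ to get the exponent $r^{-(3/p-1)_+}$, and the vanishing first moment of $\delta_i^R-|Q_i|^{-1}\1_{Q_i}$ (from the common center $X_i$) combined with a second-order Sobolev--Poincar\'e estimate in $C^0$ for $p>3/2$ to reach the $N^{-2/3}$ rate. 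The only genuine divergence is in item (ii): the paper bounds $\mathcal W_\infty(\sigma_N,n)\le\mathcal W_\infty(\sigma_N,\rho_N)+\mathcal W_\infty(\rho_N,n)\le CN^{-1/3}$ and then invokes the known duality estimate $\|\mu-\nu\|_{(W^{1,p})^*}\le C(\|\mu\|_\infty+\|\nu\|_\infty)^{1/p}\mathcal W_\infty(\mu,\nu)$, whereas you inline a proof of that estimate by unpacking the transport from $n$ to $\rho_N$ into cells $A_i$ of mass $1/N$ contained in $O(N^{-1/3})$-balls and running local Poincar\'e--Sobolev arguments; your lower bound $|A_i|\ge(N\|n\|_\infty)^{-1}$ plays exactly the role of the $\|n\|_\infty^{1/p}$ factor in the cited lemma. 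Both are sound; yours is self-contained, the paper's is shorter. Two cosmetic points: the existence of an exact transport \emph{map} realizing the $\mathcal W_\infty$ bound deserves a word (a plan suffices, since the target is atomic one can work with the disintegration $n=\sum_i n_i$ with $n_i$ of mass $1/N$ supported near $X_i$), and your parenthetical claim that a vanishing first moment is ``used implicitly in (ii)'' is unnecessary --- item (ii) only needs the zeroth-order cancellation, which is why it yields $N^{-1/3}$ and not $N^{-2/3}$.
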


We note that item $(i)$ entails in particular that 
 for all $p \in (1,\infty)$ 
\begin{align} \label{eq:convergence.rho_N.sigma_N}
N^{1/3} (\bar \rho_N -  \sigma_N) \wto 0 \qquad \text{in }  {(W^{1,p}(\Omega))^*}. 
\end{align}
It might be surprising that the scale in $N$ changes between 
\eqref{est:rhoN.sigma_N} and \eqref{est:rhoN.sigma_N.2} making \eqref{est:rhoN.sigma_N} seem far from optimal.  It must be noted, though, that by symmetry, all affine functions tested on $\sigma_N - \bar{\rho}_N$ vanish.  We will then obtain our result by comparing expansions of test-functions around each center $X_i.$ 
The discrepancy between both estimates is due to the fact that only zero-order expansions are available in $W^{1,p}$ while first-order expansions are available in $W^{2,p}.$
Finally,  inequality \eqref{est:rhoN.sigma_N.2} in case $p > 3/2$ could be complemented with a similar inequality in case $p < 3/2.$ This will be however useless to our purpose.

\medskip

For the proof, we furthermore introduce 
\begin{align}
	 \tilde \rho_N := \frac 1 N \sum_{i=1}^N \frac 1 {|B_i|} \1_{ B_i},
 \end{align}
and we first show the following estimates involving $\tilde \rho_N$:
\begin{lem}  \label{lem:Poincaré}
Let $p \in [1,\infty]\setminus \{3\}$ and assume that $B_i \subset Q_i$ for all $i.$
\begin{align}
\|\tilde \rho_N -  \sigma_N\|_{(W^{1,p}(\Omega))^*} \leq C r^{-(3/p-1)_+} N^{-1/3},
\end{align}
where $C$ depends only on $p$ and the constant $C_1$ from
\eqref{eq:Q_i}.
\end{lem}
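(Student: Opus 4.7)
The strategy is to exploit that $\tilde\rho_N$ and $\sigma_N$ assign the same mass $1/N$ to each cube $Q_i$, so that testing against a test-function reduces to a sum of local oscillations. Concretely, for any $\varphi \in W^{1,p}(\Omega)$, the pairing rewrites as
\[
\langle \tilde\rho_N - \sigma_N, \varphi\rangle = \frac{1}{N} \sum_{i=1}^N \left( \fint_{B_i} \varphi - \fint_{Q_i} \varphi \right),
\]
and since $B_i \subset Q_i$ we bound each summand by $\fint_{B_i} |\varphi - \fint_{Q_i}\varphi|$. The goal is then to control each such local oscillation through the Sobolev--Poincar\'e inequality on the cube $Q_i$ (which has side length $\ell_i \sim N^{-1/3}$) and to sum via H\"older using disjointness of the $Q_i$.

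The form of the Sobolev--Poincar\'e inequality depends on the regime of $p$. When $p>3$, Morrey's embedding yields an $L^\infty$-oscillation bound
\[
\Bigl|\varphi - \fint_{Q_i}\varphi\Bigr|_{L^\infty(Q_i)} \leq C\, \ell_i^{1-3/p} \|\nabla\varphi\|_{L^p(Q_i)} \sim C N^{-\frac{1}{3} + \frac{1}{p}} \|\nabla\varphi\|_{L^p(Q_i)},
\]
which, after summing and applying discrete H\"older ($\sum_i a_i \leq N^{1-1/p} (\sum_i a_i^p)^{1/p}$), produces the desired bound $C N^{-1/3}\|\nabla\varphi\|_{L^p}$ with no $r$-factor, matching $(3/p-1)_+ = 0$. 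When $p<3$, the scale-invariant Sobolev--Poincar\'e inequality gives
\[
\Bigl\|\varphi - \fint_{Q_i}\varphi\Bigr\|_{L^{p^*}(Q_i)} \leq C \|\nabla\varphi\|_{L^p(Q_i)}, \quad \tfrac{1}{p^*} = \tfrac{1}{p} - \tfrac{1}{3},
\]
and H\"older over $B_i$ gives $\fint_{B_i}|\varphi - \fint_{Q_i}\varphi| \leq |B_i|^{-1/p^*} \|\nabla\varphi\|_{L^p(Q_i)}$. Since $|B_i| \sim r^3 N^{-1}$, this contributes a factor $r^{-3/p^*} N^{1/p^*}$; combining with the outer $1/N$ and the discrete H\"older factor $N^{1-1/p}$ leaves exactly $r^{-3/p^*} N^{1/p^*-1/p} = r^{-(3/p-1)} N^{-1/3}$, as claimed. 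The endpoint $p=\infty$ is handled directly (oscillation controlled by $\ell_i\|\nabla\varphi\|_\infty$), and the case $p=1$ falls under the Sobolev regime.

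The main bookkeeping point is to align the $r$- and $N$-scalings arising from the disparity $|B_i|\sim r^3/N$ versus $|Q_i|\sim 1/N$: all the $r$-dependence is produced in the single step where the $L^{p^*}$-norm is converted to an $L^1$-average over $B_i$, while the $N^{-1/3}$ decay comes from the diameter of $Q_i$ entering (implicitly or explicitly) through the scaling of the Sobolev--Poincar\'e inequality. The exclusion of $p=3$ is natural since the Sobolev--Poincar\'e inequality degenerates there; no other delicate points arise.
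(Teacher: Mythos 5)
Your proposal is correct and follows essentially the same route as the paper: rewrite the pairing as a sum of local averages $\fint_{B_i}(\varphi - \fint_{Q_i}\varphi)$, apply the scale-invariant Sobolev--Poincaré inequality on each cube $Q_i$ (the $L^{p^*}$ version for $p<3$, the Morrey/$L^\infty$ version for $p>3$), convert to an average over the much smaller ball $B_i$ (which is where the factor $r^{-(3/p-1)_+}$ arises), and sum by discrete Hölder over the disjoint cubes. The bookkeeping of the $r$- and $N$-powers matches the paper's computation exactly.
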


\begin{proof}
We start with $p < 3$. Then,  we may use the continuous embedding $W^{1,p}(Q_i) \subset L^{p_*}(Q_i)$, where $1/p^*= 1/p - 1/3,$ which implies here that for any  $\varphi \in W^{1,p}(Q_i) \cap L^p_0(Q_i)$ we have:
\[
\|\varphi\|_{L^{p_*}(Q_i)} \leq C(C_1,q) \|\nabla \varphi\|_{L^q(Q_i)}
\]  
with a constant $C(C_1,p)$ independent of $\varphi$ by a straghtforward homogeneity argument. Consequently, we have for all $v \in W^{1,p}(\Omega),$ via a sequence of discrete and continuous H\"older inequalities:
\begin{align*}
|\langle \tilde \rho_N -  \sigma_N, v \rangle| & 
=  \dfrac{3}{N 4\pi R^3} \sum_{i=1}^N \int_{B_i}  \left( v(x) -  \fint_{Q_i} v(z){\text d} z \right) \text{d}x \\
& \lesssim \dfrac{1}{r^3} \sum_{i=1}^N |B_i|^{1-\frac 1{p_*}}\|\nabla v \|_{L^p(Q_i)} \lesssim \dfrac{1}{r^3} r^{3(1-\frac 1{p_*})} N^{1-\frac 1p - (1- \frac 1{p_*})} \|\nabla v\|_{L^p(\Omega)}\\
& \lesssim \dfrac{1}{r^{\frac{3}{p_*}}} N^{\frac 1{p_*}- \frac 1p} \|\nabla v\|_{L^{p}(\Omega)}.
\end{align*}
We conclude by recalling that $1/p^*= 1/p - 1/3.$

\medskip

In the case $p>3$ we have the embedding $W^{1,p}(Q_i) \subset C^{0,\theta}(Q_i)$ with $\theta = 1/3-1/p.$ This implies here that, for arbitrary 
$\varphi \in W^{1,p}(Q_i) \cap L^p_0(Q_i)$ we have,  by standard homogeneity arguments:
\[
\|\varphi\|_{L^{\infty}(Q_i)}\leq \dfrac{C(p,C_1)}{N^{\frac 13- \frac 1p}}  \|\nabla \varphi\|_{L^p(Q_i)} 
\]
with a constant $C(p,C_1)$ depending only on $p$ and $C_1$. By standard arguments, we have then that:
\[
|\langle \tilde \rho_N -  \sigma_N, v \rangle| \leq \dfrac{1}{N} N^{-(\frac 13 - \frac 1p)}\sum_{i=1}^N \|\nabla v\|_{L^{p}(Q_i)} \leq \dfrac{1}{N} N^{-(\frac 13 - \frac 1p)} N^{1-\frac 1p} \|v\|_{W^{1,p}(\Omega)}.
\]
This finishes the proof of \eqref{est:rhoN.sigma_N}.
\end{proof}

We are then in position to prove our main result.

\begin{proof}[Proof of Proposition \ref{pro:Poincare}]
The convergence \eqref{est:n.bar.rho_N} follows from \eqref{est:rhoN.sigma_N} and \eqref{est:n.sigma_N}.  It remains to show \eqref{est:rhoN.sigma_N}, \eqref{est:rhoN.sigma_N.2} and \eqref{est:n.sigma_N}.

\emph{Step 1: Proof of \eqref{est:rhoN.sigma_N}:}
Our result follows also immediately from Lemma \ref{lem:Poincaré} and the standard Poincaré-like inequality
	\begin{align}
		\|v - \fint_{\partial B_i} v \|_{L^p(B_i)} \leq C_p R \|\nabla v \|_{L^p(B_i)},
	\end{align}
	where $C_p$ depends only on $p \in (1,\infty)$.  Indeed, we split $\bar{\rho}_N - \sigma_N = \bar{\rho}_N - \tilde{\rho}_N + \tilde{\rho}_N - \sigma_N.$
	The second part is estimated \textit{via} the previous lemma while for the first part, we have:
	\begin{align}
		|\langle \tilde \rho_N - \bar \rho_N, v \rangle| &\leq \frac 1 N \sum_{i=1}^N \fint_{B_i} \left|v - \fint_{\partial B_i} v \right| \\
		& \leq \left(\frac 1 N \sum_{i=1}^N \fint_{B_i} \left|v - \fint_{\partial B_i} v \right|^p \right)^{\frac 1 p} \leq C_p N^{- \frac 1 p } R^{1- \frac 3 p}  \|\nabla v\|_{L^p(\Omega)} \\
		& = C_p N^{-\frac 13} r^{1- \frac 3 p} \|\nabla v\|_{L^p(\Omega)}.
	\end{align}

\emph{Step 2: Proof of \eqref{est:rhoN.sigma_N.2}:}
	The argument is  analogous as the proof of Lemma \ref{lem:Poincaré} in the case $p > 3$.
	Indeed, we observe that due to the assumption that $Q_i$ is centered in $X_i$, we have
	\begin{align} \label{eq_expansionordre2}
		\langle \bar \rho_N - \sigma_N, v \rangle & 
=  \dfrac{1}{N 4\pi R^2} \sum_{i=1}^N \int_{\partial B_i}  \left( v(x) -  \fint_{Q_i} v(z){\text d} z - \fint_{Q_i} \nabla v(z){\text d} z \cdot (x- X_i)  \right) \text{d}x. 
	\end{align}
		Moreover, for all $p > 3/2$ and all 
			$\varphi \in W^{2,p}(Q_i)$ satisfying
			\[
			\int_{Q_i} \varphi = 0 \qquad \int_{Q_i} \nabla \varphi = 0
			\] 
			we have,  by a standard homogeneity argument
\[
\|\varphi\|_{C^0(\bar{Q}_i)}\leq \dfrac{C(p,C_1)}{N^{\frac 2 3- \frac 1p}}  \|\nabla^2 \varphi\|_{L^p(Q_i)} 
\]
where $C(p,C_1)$ depends only on $p$ and $C_1$ from \eqref{eq:Q_i}.  This entails that:
\[
|\langle \bar \rho_N - \sigma_N , v \rangle | \leq  \dfrac{C(p,C_1)}{N^{1-1/p} N^{2/3}} \sum_{i=1}^N \|\nabla^2 v\|_{L^p(Q_i)}. 
\]
The assertion then follows again from application of the discrete Hölder inequality. 

\emph{Step 3: Proof of \eqref{est:n.sigma_N}:}
We observe that by the triangle inequality, assumption \eqref{eq:Wasserstein} and the definition of $ \sigma_N$, we have
 \begin{align}
\mathcal  W_{\infty}( \sigma_N, n) \leq \mathcal W_{\infty}( \bar{\sigma}_N, \rho_N) + \mathcal W_{\infty}(\rho_N, n)  \leq C N^{-1/3}.
 \end{align}
 where $\mathcal W_{\infty}$ is the Wasserstein distance built on the $sup$-norm.
 By definition, the first term on the right-hand side is bounded by $1/N^{1/3}.$
Now the desired estimate follows from the  result
\begin{align}
	\|\mu - \nu\|_{(W^{1,p})^*} \leq C (\|\mu\|_\infty +  \|\nu\|_\infty)^{1/p} \mathcal W_{\infty}(\mu, \nu).
\end{align}
see \cite[Exercise 38]{Santambrogio15} and \cite[Proposition 5.1]{HoferSchubert21}. This concludes the proof.
\end{proof}

\subsection{Explicit construction of distributions satisfying \texorpdfstring{\eqref{ass:Strong.concergence}}{Str}} \label{sec:Str}

We focus now on the construction of an example of particle distributions so that \eqref{ass:Strong.concergence} holds true: 
\[
N^{\frac 13}(\sigma_N - n) \text{ converges in $H^{-1}(\Omega).$}
\]
To this end,  we consider the case $\Omega = {(-1,1) \times (0,1)} \times \mathbb R.$

Fix $M \in \mathbb N^*$ and $N=2 M^{3}.$ 
Firstly, we distribute $N/2$ particles covering $(0,1)^2.$ For this,
we  construct the cubes  $\tilde{Q}_k$ ($k \in \{0,\ldots,M-1\}^3$) with centers in $\tilde{X}_k  = 1/M(k_1+1/2,k_2+1/2,k_3+1/2),$ radius $1/M$ and thus volume $2/N.$ We choose then $\lambda \in (0,1/2)$ and set
$X_k = \tilde{X}_k - \lambda/M e_1.$
\[
Q_k = 
\left\{
\begin{aligned}
& \text{cube with center $X_k$ and radius $1/(2M)$ if $k_1 >1$}\\
& \text{cube with center $X_k$ and radius $1/(2M)-\lambda/M$ if $k_1=0$} 
\end{aligned}
\right.
\]
The remaining particles and cubes are obtained by  transforming the $X_k$ with the symmetry $\sigma_1$ with respect  to the plane $\{x_1=0\}.$
{One easily checks that \eqref{eq:Wasserstein} is satisfied for $n = \frac{1}{2}\mathds{1}_{(-1,1) \times (0,1)^2} $ by considering the transport map $T(x) = X_k$ for $x \in \tilde Q_k$.}
Note that $n= \frac 12 \mathds{1}_{(-1,1) \times (0,1]^2}$ satisfies \eqref{eq:gradient}.
Explicit computations then show that, denoting $\hat{k} = (0,k_2,k_3)$ for arbitrary $(k_2,k_3) \in \{0,\dots, M-1\}^2$:
\[
N^{1/3} ( \sigma_N -  n )|_{(0,1)^2 \times \mathbb R} = 2^{-\frac 23} M \left( \left[ \dfrac{2}{N}\sum_{k_2,k_3=0}^{M-1} \dfrac{1}{|Q_{\hat k}|}\mathds{1}_{Q_{\hat k}} - \mathds{1}_{\{x_1 \in (0,(1 - \lambda)/M)\}}\right] -  \mathds{1}_{\{x_1 \in (1-\lambda/M,1)\}}\right)  . 
\]
Classical computations then entail that:
\[
\begin{aligned}
 M  \left[ \sum_{k_2,k_3=0}^{M-1}\dfrac{1}{M^3 |Q_{\hat k}|} \mathds{1}_{Q_{\hat k}} - \mathds{1}_{\{x_1 \in (0,(1 - \lambda)/M)\}}\right]  & \to \lambda \delta_{\{x_1=0\}} \text{ in $(W^{1,2}((0,1)^2 \times \mathbb R)^*$}\,,\\
M  \mathds{1}_{\{x_1 \in (1-\lambda/M,1 \}} & \to - \lambda \delta_{\{x_1= 1\}} \text{ in $(W^{1,2}((0,1)^2 \times \mathbb R)^*$}\,.
\end{aligned}
\]
{Using symmetry at $x_1 = 0$ and that $\delta_{\{x_1= 1\}} = 0$ in $H^{-1}((-1,1) 
\times (0,1) \times \mathbb R)$,} we deduce that \eqref{ass:Strong.concergence} holds true with $g= 2^{1/3} \lambda \delta_{x_1=0}$ in $H^{-1}((0,1)^2 \times \mathbb R).$ We see on this example that the term $g$ encodes  a finer description of the particle distribution.  Indeed,  we created artificially a distribution in which particles around $x_1=0$ are closer and thus have larger interactions.  Particles near $x_1=0$ will therefore be slowed down in comparison to the particles near $x_1=1.$ Such a difference will induce a variation of the velocity distribution in the cloud that is  captured by the term $v_{\infty,3}$ solution to \eqref{eq:v_infty,3}.




\section{Computation  of \texorpdfstring{$\bar{V}^{sed}_N$}{V} when \texorpdfstring{\eqref{eq:gradient}}{grad} holds true} \label{sec:favorable}

Throughout this section,  we assume that  \eqref{eq:Omega.bounded.orthogonal}--\eqref{eq:Wasserstein} and \eqref{eq:gradient} are satisfied. 
%
%
Let $(u_N,p_N)$ be the solution to \eqref{def:u_N}. 
 We remind the definition of $\bar \rho_N$ from \eqref{def.sigma_N} introduce $v_N$ as the solution to 
\begin{align} \label{def:v_N}
\left. \begin{aligned}
 	- \Delta v_N + \nabla q_N &=  N^{\frac 2 3} \bar \rho_N e_3 \quad \text{in } \Omega, \\
  \dv v_N &= 0 \quad \text{in } \Omega, \\
      v_N &= 0 \quad \text{on } \partial \Omega.
 \end{aligned} \right\}
 \end{align}
and the remainder
\begin{align} \label{def:w_N}
 w_N := u_N - v_N.
\end{align}
We will estimate the contribution of $w_N$ through  the variational characterization of Stokes solution that entails $
	\|\nabla w_N\|_{L^2(\Omega)} \leq C \|D(v_N)\|_{L^2(\cup_i B_i)}.$
We therefore first turn to the analysis of $v_N$ itself.

\medskip

We furthermore remind the definition of $\sigma_N$ from \eqref{def.sigma_N} and split $v_N$ further into $v_N = v_{N,1} + v_{N,2} + v_{N,3}$ (resp. $q_N = q_{N,1} + q_{N,2}+ q_{N,3}$) where 
\begin{equation} \left.
\begin{aligned} \label{def:v_N,1}
 	- \Delta v_{N,1} + \nabla q_{N,1} &=  N^{\frac 2 3}(\bar \rho_N - {\sigma}_N) e_3 && \text{in } \R^3, \\
  \dv v_{N,1} &= 0 && \text{in } \R^3,
 \end{aligned}\right\}
 \end{equation}
 \begin{equation} \left.
 \begin{aligned} \label{def:v_N,2}
 	- \Delta v_{N,2} + \nabla q_{N,2} & = 0 &&  \text{in } \Omega \\
  \dv v_{N,2} &= 0 && \text{in } \Omega, \\
   v_{N,2} &= -v_{N,1} && \text{on } \partial \Omega,
 \end{aligned}\right\}
 \end{equation}
   and 
   \begin{equation} \left.
\begin{aligned} \label{def:v_N,3}
 	- \Delta v_{N,3} + \nabla q_{N,3} &=  N^{\frac 2 3}({\sigma}_N - n) e_3&&  \text{in } \Omega, \\
  \dv v_{N,3} &= 0&& \text{in } \Omega, \\
  v_{N,3} &= 0&& \text{on } \partial \Omega.
 \end{aligned} \right\}
 \end{equation}
The identity $v_N = v_{N,1} + v_{N,2} + v_{N,3}$ holds because, due to assumption \eqref{eq:gradient}, the term involving $n$ on the right-hand side of \eqref{def:v_N,3} can be absorbed into the pressure: there exists a function $p_n \in L^2_{loc}(\Omega)$ such that $\nabla p_n = n e_3$.
 
\medskip
  
We will show the following properties of these functions.
\begin{prop} \label{pro:contributions.v_N}
	There exists a constant $C> 0$ depending only on $\Omega$, and on $C_0$ and $c$ from \eqref{eq:Wasserstein}--\eqref{eq:minimal.distance} as well as on  $C_1$ from \eqref{eq:Q_i} such that the following holds.
	\begin{enumerate}[(i)]
		\item \label{it:v_N,1} $N^{-1/3}v_{N,1} \wto 0$ weakly in $\dot H^1(\R^3)$ and $N^{-1/3} v_{N,1} \to 0$ strongly in $W^{1,p}(\R^3 \setminus \overline{\Omega})$ for all $p \in (1,\infty)$.
		Moreover,
		\begin{align}  \label{est:v_N,1}
			\left|V^{\St}_r - N^{-2/3} \|\nabla v_{N,1}\|^2_{L^2(\R^3)} \right| \leq C.
		\end{align}
 		\item 	\label{it:v_N,2} For $p=2$, 
	\begin{align} \label{est:v_N,2}
		 N^{-1/3} \|\nabla v_{N,2}\|_{L^{2}(\Omega)} \to 0. 
	\end{align}

	\item For $p=2,$
	\begin{align}\label{est:v_N,3-Hilb}
N^{-1/3} \|\nabla v_{N,3}\|_{L^2(\Omega)} \leq C.
\end{align}
	For all $p \in (1,\infty)$ and all bounded sets $\Omega' \subset \Omega$  there holds:  \label{it:v_N,3}
	\begin{align}\label{est:v_N,3}
	N^{-1/3} \|\nabla v_{N,3}\|_{L^p(\Omega')} \leq C'.
	\end{align}
	with $C'$ depending furthermore on $\Omega'.$
	If in addition \eqref{ass:Strong.concergence} is satisfied, then $N^{-1/3} v_{N,3} \to v_{\infty,3}$,  strongly in $H^1(\Omega)$, where $v_{\infty,3}$ is the solution to \eqref{eq:v_infty,3}.	

	\item  \label{it:v_N.in.particles} For all $\delta > 0$
	\begin{align}\label{est:v_N.in.particles}
	\limsup_{N \to \infty}	N^{-1/3}  \|D(v_{N})\|_{L^2(\cup_i B_i)} \leq C r^{3/2 - \delta}
	\end{align}				
	where the constant $C$ depends in addition on $\delta$.	
	\end{enumerate}
\end{prop}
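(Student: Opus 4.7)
My plan is to handle the three pieces $v_{N,1}, v_{N,2}, v_{N,3}$ separately by exploiting that each satisfies a linear Stokes problem whose data are controlled by Proposition \ref{pro:Poincare}, and then to combine these controls for item \ref{it:v_N.in.particles}.

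For item \ref{it:v_N,1}, I would decompose $v_{N,1} = \sum_i (u^{sp}_i - u^{cu}_i)$ where $u^{sp}_i$ is the whole-space Stokes flow with source $N^{-1/3}\frac{1}{|\partial B_i|}\mathcal H^2|_{\partial B_i}\,e_3$ (the classical uniformly-loaded sphere flow) and $u^{cu}_i$ is the analogous cube version with source on $Q_i$. The energy $\|\nabla v_{N,1}\|_{L^2}^2$, computed via the weak formulation, splits into diagonal ($i = j$) and off-diagonal ($i \ne j$) contributions. The sphere-sphere diagonals sum exactly to $N^{2/3} V^{\St}_r$ since $\|\nabla u^{sp}_i\|_{L^2(\R^3)}^2 = N^{-1/3} V^{\St}_r$, while the cube-sphere and cube-cube diagonals are $O(N^{2/3})$ by a direct scaling estimate. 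For the off-diagonal sum, the key observation is that each source $\frac{1}{|\partial B_i|}\mathcal H^2|_{\partial B_i} - \frac{1}{|Q_i|}\1_{Q_i}$ has vanishing zeroth and first moments by symmetry, so $u^{sp}_i - u^{cu}_i$ decays as a quadrupole, like $|x - X_i|^{-3}$; combined with \eqref{eq:minimal.distance}, a discrete Riemann-sum argument then bounds the off-diagonal contribution by $O(N^{2/3})$ as well, proving \eqref{est:v_N,1}. The weak convergence $N^{-1/3}v_{N,1}\wto 0$ follows because \eqref{est:rhoN.sigma_N.2} forces the renormalized source to vanish in $(W^{2,p}(\Omega))^\ast$; combined with the uniform $\dot H^1$-bound from \eqref{est:rhoN.sigma_N} this identifies the weak limit as a divergence-free solution to the homogeneous Stokes system in $\dot H^1(\R^3)$, hence zero. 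Strong convergence in $W^{1,p}(\R^3\setminus \overline\Omega)$ then follows from interior elliptic regularity since the sources are compactly supported in $\overline\Omega$.

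Items \ref{it:v_N,2} and \ref{it:v_N,3} are more routine. For \ref{it:v_N,2}, $v_{N,2}$ is the Stokes extension of $-v_{N,1}|_{\partial\Omega}$, and the strong convergence in \ref{it:v_N,1} shows that this boundary datum divided by $N^{1/3}$ vanishes in $H^{1/2}(\partial\Omega)$; the standard Stokes lifting estimate then yields \eqref{est:v_N,2}. For \ref{it:v_N,3}, the $H^1$-bound \eqref{est:v_N,3-Hilb} is a direct consequence of \eqref{est:n.sigma_N} at $p = 2$ plugged into the Stokes energy identity; the interior $L^p$-bound \eqref{est:v_N,3} follows from Calder\'on--Zygmund type interior regularity for the Stokes operator on $\Omega'$, with the right-hand side controlled in $(W^{1,p'}(\Omega))^\ast$ by \eqref{est:n.sigma_N} and the dependence on $\Omega'$ entering through $\dist(\Omega', \partial\Omega)$. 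Under \eqref{ass:Strong.concergence}, the renormalized source $N^{1/3}(\sigma_N-n)e_3$ converges to $g\,e_3$ in $H^{-1}(\Omega)$, so continuity of the Stokes solver yields the strong convergence $N^{-1/3}v_{N,3} \to v_{\infty,3}$ in $H^1_0(\Omega)$.

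The main obstacle is item \ref{it:v_N.in.particles}. For $v_{N,2}$ and $v_{N,3}$, the strategy is H\"older's inequality combined with the $L^p$-estimates above: picking $\Omega' \Supset K$ and using $|\cup_i B_i| \lesssim r^3$,
\begin{align*}
N^{-1/3}\|D v_{N,j}\|_{L^2(\cup_i B_i)} \leq N^{-1/3}\|D v_{N,j}\|_{L^p(\Omega')}\,|\cup_i B_i|^{\frac 1 2 - \frac 1 p} \lesssim r^{\frac 3 2 - \frac 3 p},
\end{align*}
so the bound $r^{3/2 - \delta}$ follows by choosing $p$ large enough that $3/p < \delta$. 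The truly delicate piece is $\|D v_{N,1}\|_{L^2(\cup_i B_i)}$: inside each $B_i$, the self-term has $D u^{sp}_i = 0$ by rigidity of the Stokes sphere flow, while $\|D u^{cu}_i\|_{L^\infty(B_i)} \lesssim N^{1/3}$ by a direct scaling estimate. For the contribution from $j \ne i$, the quadrupole decay $|D(u^{sp}_j - u^{cu}_j)(x)| \lesssim N^{-1}|x-X_j|^{-4}$ (again from the vanishing of zeroth and first moments) combined with a Riemann-sum argument using \eqref{eq:minimal.distance} yields $|D v_{N,1}(x)| \lesssim N^{1/3}$ pointwise in $\cup_i B_i$, whence $\|D v_{N,1}\|_{L^2(\cup_i B_i)} \lesssim N^{1/3}\,r^{3/2}$ and the contribution from $v_{N,1}$ after normalization is already $O(r^{3/2})$. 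Combining the three pieces yields the claim, the $\delta$-loss being exactly the price paid for handling $v_{N,3}$ by $L^p$-interpolation rather than by pointwise control.
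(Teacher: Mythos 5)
Your proposal follows essentially the same route as the paper: the same sphere-minus-cube decomposition of $v_{N,1}$ with the diagonal terms producing $V^{\St}_r$ and the off-diagonal terms controlled via the vanishing zeroth and first moments of each source (quadrupole decay $N^{-1}|x-X_j|^{-4}$ for the gradient), the same localization/regularity treatment of $v_{N,3}$, and the same H\"older-in-large-$p$ argument for item \ref{it:v_N.in.particles}. Two spots are looser than they should be but easily repaired: the strong convergence in $W^{1,p}(\R^3\setminus\overline{\Omega})$ on an \emph{unbounded} exterior set requires a uniform-in-$N$ decay estimate at infinity (the paper's Lemma \ref{prop_linfbound}), not merely interior elliptic regularity, and in item \ref{it:v_N.in.particles} the contribution of $v_{N,2}$ should be handled directly by the normalized $L^2$ convergence to zero from item \ref{it:v_N,2} (which makes its $\limsup$ vanish), since no $L^p(\Omega')$ bound with $p>2$ is available for $v_{N,2}$ and the H\"older computation at $p=2$ would only give $O(1)$.
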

The proof of this proposition is postponed to Subsection  \ref{sub_3.1}.

\subsection{Proof of \texorpdfstring{Theorem \ref{th:scaling.N}}{th} when \texorpdfstring{\eqref{eq:gradient}}{grad} holds true}

To treat the error $w_N$, we note that  $w_N$ can be associated to a pressure $\bar q_N$ to yield a solution to
\begin{align} \label{def:psi} \left.
\begin{aligned}
 	- \Delta \psi + \nabla q &= 0 && \text{in } \Omega \setminus \bigcup_{i=1}^N \overline{B_i}, \\
  \dv \psi &= 0 && \text{in } \Omega \setminus \bigcup_{i=1}^N \overline{B_i}, \\
    \psi &= 0 && \text{on } \partial \Omega,\\
  D(\psi) &= D(\varphi)  && \text {in }  B_i  \quad \text{for all } 1 \leq i \leq N, \\
  \int_{\partial B_i} \sigma[\psi, q] n &= 0  = \int_{\partial B_i} \sigma[\psi, q] n \times (x-X_i)&&  \text{for all } 1 \leq i \leq N.
 \end{aligned} \right\}
 \end{align}
with $\varphi = -v_N$. The estimate for $w_N$ then follows from the following standard estimate (see e.g. \cite[Equation (27)]{Gerard-VaretHoefer21})
\begin{prop}  \label{prop:variational.psi}
	Let $\varphi \in H^1(\cup_i B_i)$ and let $\psi$ be the solution to \eqref{def:psi}.
	Then
	\begin{align}
		\|\psi\|_{H^1(\Omega)} \leq C \|D (\varphi)\|_{L^2(\cup_i B_i)}
	\end{align}
	for a universal constant $C$.
\end{prop}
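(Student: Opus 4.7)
My plan is to exploit the variational characterization of $\psi$. Testing \eqref{def:psi} against a test field $w \in H_0[N]$ (see \eqref{eq_defHN}) and integrating by parts, the boundary contribution on each $\partial B_i$ collapses to $W_i\cdot\int_{\partial B_i}\sigma[\psi,q]\nu + \Omega'_i\cdot\int_{\partial B_i}(x-X_i)\times\sigma[\psi,q]\nu$, where $w|_{B_i} = W_i + \Omega'_i\times(x-X_i)$, and both terms vanish thanks to the no-force and no-torque conditions in \eqref{def:psi}. Using that $w$ is divergence-free off $\bigcup_i B_i$ and that $D(\psi)$ is symmetric, the weak form reduces to $\int_\Omega D(\psi):D(w)\,\dd x = 0$ for every $w \in H_0[N]$. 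Equivalently, $\psi$ is the unique minimizer of $\int_\Omega |D(\cdot)|^2$ over the affine set
\[
\mathcal A := \{\phi \in H^1_0(\Omega) : \dv \phi = 0 \text{ in } \Omega\setminus\bigcup_i \overline{B_i},\; D(\phi) = D(\varphi) \text{ in each } B_i\},
\]
whose tangent space is exactly $H_0[N]$. For any competitor $\tilde\psi \in \mathcal A$, since $\psi - \tilde\psi \in H_0[N]$, Cauchy--Schwarz applied to $\int_\Omega D(\psi):D(\psi - \tilde\psi) = 0$ gives $\|D(\psi)\|_{L^2(\Omega)} \le \|D(\tilde\psi)\|_{L^2(\Omega)}$. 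Two integrations by parts valid for $H^1_0(\Omega)$ functions yield the identity $\|\nabla\psi\|_{L^2(\Omega)}^2 = 2\|D(\psi)\|_{L^2(\Omega)}^2 - \|\dv\psi\|_{L^2(\Omega)}^2$, so $\|\nabla\psi\|_{L^2(\Omega)} \le \sqrt{2}\,\|D(\tilde\psi)\|_{L^2(\Omega)}$.

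The remaining task is to construct a competitor $\tilde\psi \in \mathcal A$ with $\|D(\tilde\psi)\|_{L^2(\Omega)} \le C\,\|D(\varphi)\|_{L^2(\bigcup_i B_i)}$. I would assemble $\tilde\psi$ particle by particle inside the enlarged balls $B'_i := B(X_i, 2R_N)$, which are pairwise disjoint and compactly contained in $\Omega$ by \eqref{eq:minimal.distance}. On each $B_i$, I first subtract from $\varphi$ the unique rigid motion $V_i + \omega_i \times (x - X_i)$ so that $\tilde\varphi_i := \varphi - V_i - \omega_i\times(x-X_i)$ has vanishing mean and vanishing antisymmetric first moment on $B_i$; the scaled Korn and Poincar\'e inequalities on $B_i$ then yield $R_N^{-1}\|\tilde\varphi_i\|_{L^2(B_i)} + \|\nabla\tilde\varphi_i\|_{L^2(B_i)} \le C \|D(\varphi)\|_{L^2(B_i)}$, with $C$ uniform in $i$ and $N$ by scaling. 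Next, I extend $\tilde\varphi_i$ to $\hat\varphi_i \in H^1(B'_i)$ via a rescaled Stein-type extension, multiply by a smooth cutoff equal to $1$ on $B_i$ and vanishing on $\partial B'_i$, and finally correct the divergence outside $B_i$ using the Bogovskii operator on the annulus $B'_i\setminus\overline{B_i}$; the result is a vector field in $H^1_0(B'_i)$ that coincides with $\tilde\varphi_i$ on $B_i$ and is divergence-free on $B'_i\setminus\overline{B_i}$. Gathering the local pieces and extending by zero produces $\tilde\psi \in \mathcal A$ with $\|\nabla\tilde\psi\|_{L^2(\Omega)}^2 = \sum_i \|\nabla\tilde\psi\|_{L^2(B'_i)}^2 \le C\|D(\varphi)\|_{L^2(\bigcup_i B_i)}^2$, which closes the argument.

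The compatibility condition needed to invoke Bogovskii on the annulus reads $\int_{B'_i\setminus B_i} \dv(\chi_i\hat\varphi_i) = -\int_{B_i} \dv\varphi$, and is automatic whenever $\varphi$ is divergence-free on each $B_i$; in the intended application $\varphi = -v_N$, this is guaranteed by \eqref{def:v_N}. The main technical point is the uniformity of the constants in the extension, Korn, Poincar\'e and Bogovskii operators: since each $B_i$, $B'_i$ and the annulus between them is an exact rescaling of a single reference configuration, the relevant operator norms transform homogeneously, and a scaling argument delivers $N$-independent bounds. This uniformity, together with the disjointness provided by \eqref{eq:minimal.distance}, is where the bulk of the care lies.
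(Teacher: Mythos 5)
The paper does not actually prove this proposition: it is quoted as a standard estimate with a pointer to \cite[Equation (27)]{Gerard-VaretHoefer21}, and the standard argument behind that estimate is exactly the one you outline — the variational characterization of $\psi$ as the minimizer of $\int_\Omega |D(\cdot)|^2$ over the constraint set, followed by a locally supported competitor built from Korn, extension, cutoff and Bogovskii. Your reduction to a competitor, the identity $\|\nabla\psi\|_{L^2}^2 = 2\|D(\psi)\|_{L^2}^2 - \|\dv\psi\|_{L^2}^2$ for $H^1_0$ fields, and your explicit flagging of the flux compatibility $\int_{B_i}\dv\varphi = 0$ (which indeed holds in the only application, $\varphi = -v_N$, and without which a universal constant cannot be expected) are all correct. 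One cosmetic point: to pass from $\|\nabla\psi\|_{L^2(\Omega)}$ to $\|\psi\|_{H^1(\Omega)}$ you need the Poincar\'e inequality on $\Omega$, available thanks to \eqref{eq:Omega.bounded.orthogonal} but at the price of a constant depending on the width of the cylinder.

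The one step that needs repair is the choice $B_i' = B(X_i, 2R_N)$. The hypotheses only give $|X_i - X_j| \geq \max(c, 2r)\, N^{-1/3}$, so if $c < 3r$ the enlarged ball $B_i'$ may intersect a neighbouring particle $B_j$, and the cutoff $\chi_i$ then pollutes $B_j$ and destroys the constraint $D(\tilde\psi) = D(\varphi)$ there (as well as the disjointness you use to sum the local energies). The fix is to perform the extension--cutoff--Bogovskii construction in the annulus between $B_i$ and the ball of radius $\tfrac{c}{2} N^{-1/3}$ inscribed in $Q_i$, which is disjoint from all other particles by \eqref{eq:minimal.distance} and \eqref{eq:Q_i}; the resulting constant then depends on the ratio $c/r$ rather than being universal. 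This is harmless for the paper — the constant remains independent of $N$, and for $r \leq c/4$ your original choice works verbatim with a genuinely universal constant — but the dependence should be acknowledged, since as stated the disjointness claim does not follow from \eqref{eq:minimal.distance} alone.
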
 

We show how {Proposition \ref{pro:Poincare}}  and {Proposition \ref{pro:contributions.v_N}}  imply {Theorem \ref{th:scaling.N}}~\ref{it:thm.favorable} and {Theorem \ref{th:expansion}}~\ref{it:thm.strucutre}.

\begin{proof}[Proof of Theorem \ref{th:scaling.N} \ref{it:thm.favorable}]

	We fix the choice of the cubes $Q_i$ by $|Q_i| = c^3 N^{-1}$ where $c$ is the constant from \eqref{eq:minimal.distance}. In this way, dependencies on $C_1$ from  \eqref{eq:Q_i} become dependencies on $c$.

Using that $V_i = \fint_{\partial B_i} u_N$, we first note that, for arbitrary direction $e \in \mathbb S^{2},$ there holds:
\[
\bar{V}_N \cdot e = \langle \bar{\rho}_Ne ,  u_N \rangle
 \]
 Writing that $u_N = v_{N,1} + v_{N,2} + v_{N,3} + w_N$ and that $\bar{\rho}_N = \bar{\rho}_N - n + n ,$ we combine \eqref{est:v_N,1}-\eqref{est:v_N,2}-\eqref{est:v_N,3}  together with \eqref{est:n.bar.rho_N}  in case $p=2$ to yield \eqref{est:V.favorable}.

	Using assumption \eqref{eq:gradient} and the fact that $u_N$ is divergence free and that $V_i = \fint_{\partial B_i} u_N$, we rewrite
	\begin{align} \label{eq:expansion.0}
		\bar V_N^{sed} =  \langle (\bar \rho_N - n)e_3, u_N \rangle 
	\end{align}
	We recall the decomposition $u_N = v_N + w_N = v_{N,1} + v_{N,2} + v_{N,3} + w_{N}$. 
By \eqref{est:v_N.in.particles}, Proposition \ref{prop:variational.psi} and \eqref{est:n.bar.rho_N} with $p=2$
	\begin{align} \label{est:contribution.w}
	\limsup_{N\to \infty}  \left |\langle (\bar \rho_N - n)e_3,w_{N} \rangle \right|  \leq C_\delta r^{1-\delta}.
	\end{align}
	Moreover, using the Stokes equations that $v_N$ solves,
	\begin{align*} 
			 \langle (\bar \rho_N - n)e_3, v_N \rangle &  = 
 			 \langle -\Delta v_N + \nabla q_N , v_{N} \rangle =
			  N^{-2/3} \|\nabla v_N\|_{L^2(\Omega)}^2 
	\end{align*}
From \eqref{est:v_N,2} we infer that we have a remainder $rem_N$ going to $0$ as $N \to \infty$ such that:
\[
  \langle (\bar \rho_N - n)e_3, v_N \rangle =  N^{-2/3} \|\nabla(v_{N,1} + v_{N,3})\|_{L^2(\Omega)}^2 + rem_N .
\]
and thus:
\begin{align}  \label{v_N,1+v_N,3}
\limsup_{N \to \infty}	|\bar V_N^{sed} - V^{\St}_r| &\leq 
\limsup_{N \to \infty}	| N^{-2/3} \|\nabla (v_{N_1} + v_{N,3}) \|_{L^2(\Omega)}^2  - V^{\St}_r|  + C_{\delta} r^{1-\delta} 
\end{align} 
At this point, we realize that, with \eqref{est:rhoN.sigma_N} and \eqref{est:v_N,3} with $p>3$ 
\[
N^{-\frac 23}\int_{\Omega} \nabla v_{N,1} : \nabla v_{N,3} = \langle   (\bar{\rho}_N - \sigma_N) e_3, v_{N,3} \rangle \leq C .
\]
Therefore, expanding the square in \eqref{v_N,1+v_N,3}, and using also \eqref{est:v_N,3-Hilb}  yields
\[
\limsup_{N \to \infty}	|\bar V_N^{sed} - V^{\St}_r| \leq \limsup_{N \to \infty}	| N^{-2/3} \|\nabla v_{N_1}  \|_{L^2(\Omega)}^2  - V^{\St}_r|  +  C
\]
and we obtain the expected result thanks to \eqref{est:v_N,1}.
 \end{proof}

\medskip

\begin{proof}[Proof of Theorem \ref{th:expansion} \ref{it:thm.strucutre} ]
We now turn to the 
proof of Theorem \ref{th:expansion} \ref{it:thm.strucutre}. This time, we choose the cubes $Q_i$ such that assumption \eqref{ass:Strong.concergence} is satisfied. We revisit the latter computations,  using that Proposition \ref{pro:contributions.v_N} provides the weak convergence $N^{-1/3} v_{N,1} \wto 0$ in $\dot H^1(\Omega)$ and that, thanks to \eqref{ass:Strong.concergence},   we have the strong convergence $N^{-1/3} v_{N,3} \to v_{\infty,3}.$ We infer:
\begin{align} \label{eq:energy.sum.2}
\begin{aligned}
			\limsup_{N \to \infty} \langle (\bar \rho_N - n)e_3, v_N \rangle &= \limsup_{N \to \infty}  N^{-2/3} \|\nabla v_N\|_{L^2(\Omega)}^2\\
			 &= \limsup_{N \to \infty}  N^{-2/3} \left( \|\nabla v_{N,1}\|^2_{L^2(\R^3)} + \| \nabla v_{\infty,3}\|_{L^2(\Omega)}^2\right),
\end{aligned}
\end{align}
where we also used that $N^{-1/3} v_{N,1} \to 0$ strongly in $\dot H^{1}(\R^3 \setminus \Omega)$ in order to replace $\Omega$ by $\R^3$.  Combining  \eqref{eq:energy.sum.2} with \eqref{eq:expansion.0} and \eqref{est:contribution.w} yields \eqref{structure}.
Moreover, by definition of $v_{\infty,3}$ there holds:
\begin{align}
         \|\nabla v_{\infty,3}\|_{L^2(\Omega)}^2&  = \lim_{N \to \infty} N^{1/3} \langle v_{\infty,3}, (\sigma_N - n) e_3 \rangle  \notag \\
         & = \lim_{N \to \infty} \left( N^{1/3}\langle v_{\infty,3}, \bar{\rho}_N e_3 \rangle + N^{1/3}\langle v_{\infty,3}, (\sigma_N - \bar{\rho}_N) e_3 \rangle \right),
\end{align}
where we used again that $\langle v_{N,3}, n e_3 \rangle = 0$. We conclude \eqref{v_3,infty} by observing  that  $N^{1/3} (\sigma_N - \rho_N) \wto 0$ weakly in $\dot H^{-1}(\Omega)$ due to \eqref{est:rhoN.sigma_N}--\eqref{est:rhoN.sigma_N.2}.

Finally, \eqref{weak.v_infty.3} is a consequence of the convergence $N^{-1/3}( v_{N,1} + v_{N,2}) \wto 0$ in $\dot H^1(\Omega)$ from Proposition \ref{pro:contributions.v_N} as well as the bound on $w_N$ that follows from Proposition \ref{prop:variational.psi} and \eqref{est:v_N.in.particles}.
\end{proof}

\subsection{Proof of Proposition \ref{pro:contributions.v_N} } \label{sub_3.1}
Item \ref{it:v_N,3} is independent and proven in a first step. Item \ref{it:v_N,2} is a consequence to the properties of $v_{N,1}$ outside $\Omega$ and is proven in a last step after tackling item \ref{it:v_N,1}. Item \ref{it:v_N.in.particles} will follow from estimates that we show along the proof of items \ref{it:v_N,1}--\ref{it:v_N,3}. All the constants $C$ involved in the following computations are harmless constants. They may depend on the involved exponent $p$ and the constants $c$, $C_0$ and $C_1$ appearing in \eqref{eq:Wasserstein}-\eqref{eq:minimal.distance}-\eqref{eq:Omega.bounded.orthogonal}.
 
\medskip
 
\noindent\emph{Step 1: Proof of \ref{it:v_N,3}:}

To obtain \eqref{est:v_N,3} we proceed in two steps showing in passing the other statements in item \ref{it:v_N,3}. Firstly,  
since $\Omega$ is bounded in one direction (orthogonal to $\xi$) it is standard to adapt the classical construction of solutions to \eqref{def:v_N,3} (see for instance \cite[Section IV.1]{Galdi11}) to yield that $v_{N,3} \in H^1_0(\Omega)$ with 
\[
N^{-1/3}\|v_{N,3}\|_{H^1_0(\Omega)} \leq C
\] 
and claimed convergence when $N\to \infty$ thanks to \eqref{ass:Strong.concergence} and the linearity of the Stokes equations.

Then, we introduce a truncation function $\chi$ such that  $\Omega^{\chi} = \supp(\chi) \cap \Omega$ is $C^2$ and satisfies $\Omega' := \{\chi = 1 \} \subset \Omega^{\chi} \subset \Omega.$ {We set
\begin{align}
v_{N,3}^{\chi} = \chi v_{N,3} - \tilde{v}_{\chi},
 && q_{N,3}^{\chi} = \chi q_{N,3}
\end{align}
where $\tilde{v}_{\chi}$ lifts the divergence of $\chi v_{N,3}$  
in $H^1_0(\Omega^{\chi} \setminus \Omega')$. We have then that \mh{$\dv (v_{N,3}^{\chi}) = 0$}
and
\[
\int_{\Omega^{\chi}} \nabla v_{N,3}^{\chi} : \nabla w 
= \langle N^{\frac 2 3}({\sigma}_N - n) e_3 , \chi w \rangle  +  \int_{\Omega^{\chi}} v \cdot (2 \nabla w \nabla \chi + \Delta \chi w)
- \int_{{\Omega}^{\chi}} \nabla \tilde{v}_{\chi} : \nabla w + \int_{{\Omega}^{\chi}} p \nabla \chi \cdot w.
\]
for any divergence-free $w \in \mathcal C^{\infty}_c(\Omega^{{\chi}}).$ Firstly, we use the embedding $H^{1}_0(\Omega) \subset L^6(\Omega^{\chi})$ to yield that
up to a trivial extension $\tilde{v}_{\chi} \in W^{1,6}_0(\Omega^{\chi})$  (see \cite[Theorem III.3.1]{Galdi11}). Thus, since $p \in L^2(\Omega^\chi)$ (see \cite[Lemma IV.1.1]{Galdi11}) we deduce $v_{N,3}^{\chi} \in W^{1,6}_0(\Omega^{\chi})$ (see \cite[Theorem IV.6.1]{Galdi11}) with bounds that entail
\[
N^{-1/3} \|v_{N,3}\|_{W^{1,6}(\{\chi=1\})} \leq C^{\chi} 
\] 
with $C^{\chi}$ depending furthermore on $\Omega^{\chi}.$
This entails that $v_{N,3} \in L^{\infty}(\{\chi = 1\}) $
We can then reproduce the same argument with a second $\chi$ with support a little smaller to yield:
\[
N^{-1/3} \|\nabla v_{N,3}\|_{L^p(\Omega')} \leq C'
\]
whatever $p \in (1,\infty)$ with the expected dependencies for $C'.$

\medskip		
		
\noindent \emph{Step 2: Proof of \ref{it:v_N,1}:}
The assertion that $N^{-1/3} v_{N,1} \wto 0$ in $\dot H^1(\R^3)$ is a consequence of 
\eqref{eq:convergence.rho_N.sigma_N}. We mention here only that the estimate we derived in $(W^{1,2}(\Omega))^*$ extends straightforwadly into an estimate in the dual of $\dot{H}^1(\mathbb R^3).$ We write then:
	\begin{align}
	v_{N,1} = \sum_{i=1}^N U_i,
\end{align}
where 
 \begin{align}
 - \Delta U_i + \nabla P_i = N^{-1/3}   (\delta_i^{R} - \frac 1 {|Q_i|}\1_{Q_i}) e_3 \quad \dv U_i = 0 \qquad \text{in } \R^3
\end{align}
and $\delta_i^{R} = \mathcal H^2|_{\partial B_i}/{|\partial B_i|}$ is the normalized uniform measure on $\partial B_i$.
Then, 
\begin{align}
	\|\nabla v_{N,1}\|^2_{L^2(\R^3)} = \sum_{i,j=1}^N \int \nabla U_i : \nabla U_j \dd x. 
\end{align}
For the diagonal terms, we split $U_i = U_{i,1} - U_{i,2},$ $P_i = P_{i,1}-P_{i,2}$ corresponding respectively to the solutions of Stokes equations on $\mathbb R^3$ with source terms $N^{-1/3}  \delta_i^R e_3\ $ and   $N^{-1/3}  |Q_i|^{-1} \1_{Q_i} e_3$.  Thanks, to the theory on Stokes problem on $\mathbb R^3$ (see \cite[Section IV.2]{Galdi11}), we know that such solutions can be computed by convolution with a fundamental solution.  We denote $\Phi$ the fundamental solution for the velocity-field. We shall use below extensively that $\Phi$ is $(-1)$-homogeneous (see \cite[Eq. IV.2.3]{Galdi11} for the exact formula). In case of $U_{1,i}$ the existence theory for Stokes problem in exterior domains yields that we have also an exact solution (see \cite[Section V, Eq. (V.0.4)]{Galdi11}). This formula entails in particular that  $U_{i,1} = V_r^{\St}$ in $B_i.$

\medskip

With these remarks at-hand now, we obtain by multiplying the Stokes equations for $U_{i,1}$ with $U_{i,1}$ that:
\begin{align}
	\|\nabla U_{i,1}\|^2_{L^2(\mathbb R^3)} =N^{-1/3} V_{r}^{\St}.   
\end{align}
Moreover, we have, using first the weak formulation of the Stokes equations and then standard estimates for the convolution with $\Phi$ :
\begin{align}
\label{est:L^infty-U_2}
\begin{aligned}
	\|\nabla U_{i,2}\|^2_{L^2} + |(\nabla U_{i,1}, \nabla U_{i,2})_{L^2(\R^3)}| &\leq  N^{-1/3}   \|U_{i,2}\|_{C^0(\overline{Q_i})} \\ &\leq  C N^{-1/3}  N^{2/3} \|\1_{Q_i}\|^{1/3}_\infty  \|\1_{Q_i}\|_1^{2/3} \leq C  N^{-1/3} .
\end{aligned}
\end{align}
Thus, expanding the sum for $U_i$ when computing the $L^2$-norm, we obtain:
\begin{align} \label{est:diagonal}
	\left| \|\nabla U_{i}\|^2_{L^2} - N^{-1/3}  V_{r}^{\St}   \right|  \leq  C  N^{-1/3} .	
\end{align}
Finally, since $U_{i,1}$ is constant in $B_i$, we may reproduce the convolution arguments with $\nabla U_{i,2}$ to yield:
\begin{align} \label{est:diagona.in.particle}
	\|\nabla U_i \|_{L^\infty(B_i)} \leq N^{2/3} \|\1_{Q_i}\|^{2/3}_\infty  \|\1_{Q_i}\|_1^{1/3} \leq C N^{1/3}.
\end{align} 

\medskip

We are now in position to estimate the off-diagonal terms. 
For fixed $i \in \{1,\ldots,N\},$ we consider two cases for index $j \neq i.$ 
Firstly, we say that $Q_j$ is a neighbor of $Q_i$ if $i \neq j$ and $\dist(Q_i,Q_j) \leq C_1$ with $C_1$ being the constant from \eqref{eq:Q_i}. We note that for each $i$ there are at most $M$ neighbors $Q_j$ of $Q_i$ where $M \in \N$ depends only on $C_1$.
We observe now from the explicit formula for $U_{j,1}$
\begin{align}
    |U_{j,1}(x)| \leq C \frac{N^{-1/3}}{|x - X_j|}
\end{align}
for all $x \in \R^3 \setminus B_j$. Thus, combining this with the bound of $U_{j,2}$ derived in \eqref{est:L^infty-U_2}, we have
for all $i \neq j$
\begin{align}
    \|U_{j}(x)\|_{L^\infty(Q_i)} \leq C.
\end{align}
This yields after integration by parts:
\begin{align} \label{est:off-diagonal0}
        \left| 
\sum_{i=1}^N \sum_{Q_j \text{ neighb. } Q_i} \int_{\mathbb R^3} \nabla U_i : \nabla U_j 
\right| \leq N^{-1/3} \left| 
\sum_{i=1}^N \sum_{j \, | \, Q_j \text{ neighb. } Q_i} \|U_j\|_{L^\infty(Q_i)} 
\right| \leq C N^{2/3}
\end{align}

When $Q_j$ is not a neighbor of $Q_i$ we may use the following estimate 
for  smooth test-functions which is reminiscent of  \eqref{eq_expansionordre2}:
\begin{align}
	\langle \delta_i^{R} - |Q_i|^{-1} \1_{Q_i}, \varphi \rangle \leq C N^{-2/3} \|\nabla^2 \varphi\|_{L^\infty(Q_i)}.
\end{align}
Applying this twice,  with $\Phi$:
\begin{align} \label{est:fourth.gradient}
\begin{aligned}
	\left| \int \nabla  U_i : \nabla U_j \dd x\right|  & = N^{-1/3} \left| \langle \delta_i^{R} - |Q_i|^{-1} \1_{Q_i},  U_j \cdot e_3 \rangle \right|
	\\
	& \leq N^{-1} \|\nabla^2 U_j \|_{L^\infty(Q_i)} \\
	&=   N^{-4/3}  \|\langle \delta_j^{R} - |Q_j|^{-1}\1_{Q_j}, \nabla^2 \Phi(x - \cdot) e_3 \rangle\|_{L^\infty(Q_i)} \leq \frac{N^{-2}}{|X_i - X_j|^5}
\end{aligned}
\end{align}
since $|X_i- X_j|$ is comparable to $|x-X_j|$ uniformly in $x \in Q_j$ when $Q_i$
and $Q_j$ are not neighbors.
Thus, 
\begin{align} \label{est:off-diagonal}
	\left| \sum_{i=1}^N \sum_{\substack{i\neq j \\ Q_i \text{ not neighb.} Q_j}} \int_{\mathbb R^3} \nabla  U_i : \nabla U_j \dd x \right| \leq  C \sum_{i=1}^N \sum_{i\neq j}\frac{N^{-2}}{|X_i - X_j|^5} \leq C N^{2/3}.
 \end{align}
Combining \eqref{est:diagonal} and \eqref{est:off-diagonal0}--\eqref{est:off-diagonal} yields \eqref{est:v_N,1}. 

\medskip

Moreover, $X_j$ is in the center of $Q_j$  so that $B_i$ is far from  the support of the convolution defining $U_j$ (the distance scales like $1/N^{1/3}$ with a constant depending on the parameters $c,C_1$ involved in \eqref{eq:minimal.distance}-\eqref{eq:Q_i}).  Arguing as for \eqref{est:fourth.gradient}, we find then a constant $C$ such that,  for $i \neq j$ 
\begin{align}  \label{eq_linfbound}
\|\nabla U_j \|_{L^\infty(B_i)} \leq C \frac{N^{-1}}{|X_i - X_j|^4}   \qquad 
\| \sum_{j \neq i} \nabla U_{j}\|_{L^{\infty}(B_i)} \leq C N^{1/3}.
\end{align}
Combining with \eqref{est:diagona.in.particle} yields
\begin{align} \label{v_N,1.in.particles}
	\|D(v_{N,1}) \|_{L^2(\cup_i B_i)} \leq C r^{3/2} \sup_i \|D(v_{N,1}) \|_{L^\infty(B_i)} \leq C r^{3/2} N^{1/3}.
\end{align}

\medskip

It remains to analyse the convergence of $N^{-1/3}v_{N,1} \to 0 $ outside $\Omega$. 
For this, we first provide an $L^{\infty}$-bound that we formulate in the following lemma for future reference:
\begin{lem} \label{prop_linfbound}
Assume that  for all $i=1,\ldots,N$ we have $Q_i \subset  \Omega'$ for some compact $\Omega' \subset \mathbb R^3.$ Then, for any $x \in \mathbb R^3 \setminus \overline \Omega',$ there holds:
\[
\begin{aligned}
\left| \sum_{i=1}^N U_i \right|  & \leq C\left( \mathds{1}_{\dist(x,\Omega') < 2} + \dfrac{\mathds{1}_{\dist(x,\Omega') >1}}{\dist(x,\Omega')^3}\right) \\
\left| \sum_{i=1}^N \nabla U_i \right|  & \leq C\left(  \min  (N^{1/3},   \dist(x,\Omega')^{-1})\mathds{1}_{\dist(x,\Omega') < 2} + \dfrac{\mathds{1}_{\dist(x,\Omega') >1}}{\dist(x,\Omega')^4}\right).
\end{aligned}
\] 
\end{lem}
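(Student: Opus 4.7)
My plan is to work from the explicit convolution representation $U_i = N^{-1/3}(\Phi \ast \mu_i) e_3$, where $\Phi$ is the Oseen tensor and $\mu_i := \delta_i^R - |Q_i|^{-1}\mathbf 1_{Q_i}$, and to exploit the fact that both $\partial B_i$ and the cube $Q_i$ are symmetric about their common centre $X_i$. This gives $\int \mu_i = 0$, $\int (y-X_i)\,\mu_i(\d y) = 0$, and in fact all odd moments of $\mu_i$ about $X_i$ vanish. Since $X_i$ lies at the centre of $Q_i \subset \Omega'$, I have $|x-X_i| \geq \tfrac{1}{2}\mathrm{side}(Q_i) \gtrsim N^{-1/3}$ for every $x\notin\overline{\Omega'}$, which is precisely the regime in which the Taylor expansion of $\Phi(x-\cdot)$ about $X_i$ is effective.

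Applying a second-order Taylor expansion to $\Phi(x-y)$ around $y=X_i$ and using the vanishing zeroth and first moments of $\mu_i$ together with the $(-1)$-homogeneity of $\Phi$, exactly as in \eqref{est:fourth.gradient}, I obtain the pointwise estimates
\begin{equation*}
|U_i(x)| \lesssim N^{-1/3}\,\bigl\|\nabla^2\Phi(x-X_i)\bigr\|\int |y-X_i|^2\,\d|\mu_i|(y) \lesssim \frac{N^{-1}}{|x-X_i|^3}, \qquad |\nabla U_i(x)| \lesssim \frac{N^{-1}}{|x-X_i|^4}.
\end{equation*}
These are the pointwise building blocks. In the far regime $\dist(x,\Omega')>1$, compactness of $\Omega'$ ensures $|x-X_i|\sim\dist(x,\Omega')$ for every centre, so the $N$ terms collapse via the $N^{-1}$ prefactor to the claimed $\dist^{-3}$ and $\dist^{-4}$ rates. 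In the near regime $\dist(x,\Omega')<2$ I decompose the particles dyadically around $x$: the separation assumption \eqref{eq:minimal.distance} bounds the number of $X_i$ in a shell of radius $2^k N^{-1/3}$ by $C\, 2^{3k}$, so each shell contributes $O(2^{-k} N^{1/3})$ to $\sum|\nabla U_i|$; summing from the critical scale $k_0$ with $2^{k_0}N^{-1/3}\sim \dist(x,\Omega')$ telescopes to $\min(N^{1/3}, \dist^{-1})$. The $N^{1/3}$ saturation for very small $\dist$ accommodates the handful of particles at distance $\sim N^{-1/3}$, where I fall back on the direct pointwise bound $\|\nabla U_i\|_{L^\infty(B_i)}\lesssim N^{1/3}$ from \eqref{est:diagona.in.particle}.

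The main obstacle is the uniform $O(1)$ bound on $|\sum_i U_i|$ itself in the near regime: summing the $O(1)$ per-shell contribution of the Taylor estimate over the roughly $\log N$ dyadic scales between $N^{-1/3}$ and $1$ naively produces a spurious logarithmic factor. Removing it requires a finer cancellation, which I expect to extract from the tensorial identity $\int(y-X_i)\otimes(y-X_i)\,\mu_i(\d y) = c_i\,\mathrm I$ with $c_i = O(N^{-2/3})$, together with the fact that the trace of the second-order Oseen kernel reduces to the pressure gradient $\Delta\Phi = \nabla P$ away from the origin. The leading Taylor contribution thus takes the schematic form $N^{-1}\sum_i \nabla P(x-X_i)$, which in view of the regularity of the point pattern controlled by \eqref{eq:Wasserstein} can be treated as a Riemann sum approximating $\int_{\Omega'} n(y)\nabla P(x-y)\,\d y$, an integral that extends continuously up to $\partial\Omega'$ and yields the desired $O(1)$ bound.
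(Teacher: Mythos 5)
Your treatment of the second (gradient) bound is correct and essentially identical to the paper's argument: the two-moment cancellation giving $|\nabla U_i(x)|\lesssim N^{-1}|x-X_i|^{-4}$ is exactly \eqref{est:fourth.gradient}/\eqref{eq_linfbound}, the far regime is handled the same way, and your dyadic shell count in the near regime is just a more explicit version of the paper's bound of the non-neighbour sum by $C\dist(x,\cup Q_i)^{-1}$ combined with the crude $CN^{1/3}$ estimate for the finitely many neighbouring cubes (note only that for those you should bound $\nabla U_{i,1},\nabla U_{i,2}$ at the exterior point $x$ via $|x-X_i|\gtrsim N^{-1/3}$ and the convolution estimate, rather than cite \eqref{est:diagona.in.particle}, which is stated on $B_i$; the bound is the same).

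The divergence is in the zeroth-order bound, and there your proposal has a genuine gap. You correctly diagnose that summing $N^{-1}|x-X_i|^{-3}$ over the $\sim\log N$ dyadic shells gives $O(1)$ per shell and hence a spurious $\log N$ in the near regime; the paper's proof does not engage with this (it only asserts the first bound is ``obtained similarly'', which taken literally yields $C\log N$ rather than $C$). However, your proposed repair does not close the gap. The reduction via the isotropic second moment to $N^{-1}\sum_i\nabla P(x-X_i)$ is fine, but the final step --- that this Riemann sum is controlled by $\int_{\Omega'}n(y)\,\nabla P(x-y)\dd y$, ``an integral that extends continuously up to $\partial\Omega'$'' --- is unjustified and false at the stated level of generality. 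Since $P$ is, up to constants, $\nabla(1/|x|)$, that integral is a second derivative of the Newtonian potential of $n\1_{\Omega'}$, i.e.\ a Calder\'on--Zygmund operator applied to an $L^\infty$ function: it lands in BMO, not in $L^\infty$. Concretely, if the particle cloud fills a cube (as in the example of Section \ref{sec:Str}), then at distance $N^{-1/3}$ from an edge of that cube the quantity $N^{-1}\sum_i\nabla P(x-X_i)$ is of size $\log N$, and the continuum integral diverges logarithmically there as well; an $O(1)$ bound would require extra regularity of $n$ and of the boundary of its support, which is not assumed. So the first estimate of the lemma is not established by your argument (and the paper's own one-line justification of it is equally incomplete); fortunately, only the gradient bound, and the zeroth-order bound multiplied by the extra factor $N^{-1/3}$ --- for which a $\log N$ loss is harmless --- are used in the sequel.
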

\begin{proof}
We provide a computation of the second bound since the first one is obtained similarly.
Fix $x \in \mathbb R^3 \setminus \Omega'.$ We have then:
\begin{align}
 \sum_{i=1}^N \nabla U_i 
= \sum_{Q_i \text{neighb.} x} \left( \nabla U_{i,1} + \nabla U_{i,2} \right)
+ \sum_{Q_i \text{ not neighb.} x } \nabla U_i. \label{sum.neighbors}
\end{align}
where we define \enquote{$Q_i$ neighboring $x$} as $\dist(Q_i,x) < 2 (C_1/N)^{1/3}$ (with $C_1$ given in \eqref{eq:Q_i}). 
For the second sum, we proceed similarly to \eqref{eq_linfbound} to obtain that:
\[
\begin{aligned}
\left|\sum_{i | Q_i \text{ not neighb.  of } x } \nabla U_i  \right| 
& \leq \sum_{i | Q_i \text{ not neighb.  of } x } \dfrac{CN^{-1}}{|x-X_i|^4} \leq C \dist(x,  \cup \{ Q_i \text{ not neighb.  of } x \})]^{-1} \\
& \leq C\min  (N^{1/3},   \dist(x,\Omega')^{-1})
\end{aligned}
\]
We note then that we may only have a finite number of indices in the first sum in \eqref{sum.neighbors} and that, for each $i$ neighbor of $x$ there holds:
\[
|\nabla U_{i,1}(x)| \leq \dfrac{CN^{-1/3}}{|x-X_i|^2} \leq CN^{1/3} .   
\]
since $B_i$ is $C/N^{1/3}$ far from $\partial  Q_i.$  We treat the second term with convolution arguments as in \eqref{est:diagona.in.particle} and we obtain $|\nabla U_{i,2}(x)| \leq C N^{1/3}.$
Eventually, we conclude that:
\[
\left| \sum_{i=1}^N \nabla U_i(x)\right| \leq C\left(  \min  (N^{1/3},  \dist(x,\Omega')^{-1}) + N^{1/3}\mathds{1}_{\{\dist(x,\Omega') < (2C_1/N)^{1/3}\}} \right).
\]
We obtain the first bound when  $\dist(x,\Omega') \leq 2.$ When
$\dist(x,\Omega') > 1$ we remark that there are no neighboring $Q_i$ to $x$ and the above computations yield:
\[
\left| \sum_{i=1}^N \nabla U_i(x)\right| \leq \dfrac{C}{N} \sum_{i=1}^{N} \dfrac{1}{|x-X_i|^4}
\]
we conclude by noting that $|x-X_i| \geq \dist(x,\Omega')$ for each $i$ in the sum.
\end{proof}

We continue with the proof of Proposition \ref{pro:contributions.v_N} \ref{it:v_N,1}.
 {Let $\Omega' \subset \Omega$ be chosen independent of $N$ containing all the cubes $Q_i$ which is possible due to assumption \eqref{eq:K}. Then, since $v_{N,1} = \sum_i U_i$, the above lemma implies with dominated convergence that  for arbitrary $p \in (1,\infty)$:}
\begin{align} \label{v_N,1.outside.K}
\lim_{N\to \infty} N^{-1/3} \|\nabla v_{N,1}\|_{L^p(\mathbb R^3 \setminus \Omega')} = 0.
\end{align}
 In particular, we have the same convergence in $W^{1,p}(\mathbb R^3 \setminus \overline{\Omega}).$ 

\medskip

\noindent \emph{Proof of \ref{it:v_N,2}:}
  Using that $v_{N,2}$ is solution to the (homogeneous) Stokes solution inside $\Omega$ (with boundary condition  $-v_{N,1}$ on $\partial \Omega$), we have the variational characterization
\[
\| \nabla v_{N,2}\|_{L^2(\Omega)} = \min \{ \| \nabla v\|_{L^2(\Omega)}:   v \in \dot{H}^{1}(\Omega), ~ \textrm{div} v = 0, ~ v_{|_{\partial \Omega}} = -v_{N,1}  \}.
\]
{To construct a suitable competitor, we consider again a bounded (and connected) set $\Omega'$ as above and set  $v= v_{N,1}$ in $\R^3 \setminus \Omega'$. Inside of $\Omega'$ we then take a divergencefree extension of $v$. It is classical that such an extension can be constructed (e.g. by use of a Bogovk\v{i}i operator) since the condition $\int{\partial \Omega'} v \cdot n = 0$ is satisfied because $\dv v_{N,1} = 0$, and that the extension satisfies
\begin{align*}
    \|\nabla v\|_{L^2(\R^3)} \lesssim C \|\nabla v\|_{L^2(\R^3 \setminus \Omega')} = C \|\nabla v_{N,1}\|_{L^2(\R^3 \setminus \Omega')}
\end{align*}
In view of  \eqref{v_N,1.outside.K}, this concludes the proof of \ref{it:v_N,2}.}

\medskip

\noindent \emph{Proof of \ref{it:v_N.in.particles}:}
 The statement is an immediate consequence of \eqref{v_N,1.in.particles}, item \ref{it:v_N,2} and item \ref{it:v_N,3} applied with $\Omega'$ that contains $K$ from assumption \eqref{eq:K} and with $p$ sufficiently large.




\section{Explicit computation of the first order correction for periodic configurations} \label{sec:periodic}

In this section, we complete the proof of Theorem \ref{th:expansion} by justifying item \ref{it:thm.periodic}. We will thus assume \eqref{ass:Periodic} throughout this section.
We will assume without loss of generality that $t_d = 0$ in \eqref{ass:Periodic}. Indeed, since we consider the norm of $v_{N,1}$ in the whole space $\R^3$, the shift $t_d$ does not have any influence.

\medskip

We first note that, by classical arguments, there is a unique $v_{per} \in \dot{H}^1(\mathbb T^3_d)$ (homogeneous means here that we consider mean-free functions)
to which we can associate a pressure $p_{per} \in L^2(\mathbb T^3_d)$ such that \eqref{eq_Stokesperiodique} holds true. 
We consider then in analogy to the cubes $Q_i$, $1 \leq i \leq N$ the covering of $\mathbb R^3$ by cubes $(Q_\alpha)_{\alpha\in \mathbb Z^3}$ where $Q_\alpha = N^{-1/3}d (\alpha+ (-1/2,1/2)^3$. Similarly, we adapt the notations
introduced in Section \ref{sub_3.1}: for $\alpha \in \mathbb Z^3,$ $(U_\alpha,P_\alpha)$ is the solution to 
 \begin{align}
 - \Delta U_\alpha + \nabla P_\alpha = N^{-1/3}   (\delta_\alpha^{R} - \frac 1 {|Q_\alpha|}\1_{Q_\alpha}) e_3 \quad \dv U_\alpha= 0 \quad \text{in } \R^3, \qquad  \lim_{|x|\to \infty} |U_\alpha(x)|=0.
\end{align}
and $\delta_\alpha^{R}$ is the normalized uniform measure on $\partial B_\alpha = \partial B_{r N^{-1/3}}(dN^{-1/3}\alpha)$. We keep for technical convenience the labels $\alpha \in \mathbb Z^3$
 We then note by a scaling argument  that:
\[
v_{per}(x) = \sum_{\alpha \in \mathbb Z^3} U_\alpha(N^{-1/3}x) - \fint_{[0,d]^3} U_\alpha(N^{-1/3}y) \dd y  \qquad  v_{N,1} = \sum_{i \in I_{N}} U_{i} + \sum_{i \not\in I_{N}} U_{i},
\]
where we recall the set $I_N$ from \eqref{ass:Periodic} and use the convention that the sums over the index $i$ runs over the set $\{1,\dots,N\}$.

The fact that the first sum converges in $\dot{H}^1(\mathbb T^3_d)$ follows from the decay of $\nabla U_\alpha$ (cf. \eqref{eq_linfbound}).
Let $Z_N \subset \Z^3$ be such that $\cup_{\alpha \in Z_N} Q_\alpha = \cup_{i \in I_N} Q_i = E_N$
with $E_N$ as in \eqref{ass:Periodic}.  We then obtain then:
\[
\|\nabla v_{N,1}\|^2_{L^2(\mathbb R^3)} = N^{2/3} \|\nabla v_{per}(N^{1/3}x) \|^2_{L^2(E_N)} +  rem_{1,N} + rem_{2,N} 
\]
where
\begin{align*}
rem_{1,N} &= -\| \sum_{\alpha \notin Z_{N}} \nabla  U_\alpha \|^2_{L^2(E_N)} - 2 \int_{E_N} N^{1/3}\nabla v_{per}(N^{1/3}x) :  \sum_{\alpha \notin Z_N}   \nabla U_\alpha \dd x \\
rem_{2,N} & = \| \sum_{i \in I_{N}}  \nabla U_i\|^2_{L^2(\mathbb R^3 \setminus E_N)} + \|\sum_{i \notin I_{N}} \nabla  U_i \|^2_{L^2(\mathbb R^3)} + 2 \int_{\R^3} \nabla v_{N,1} : \sum_{i \notin I_{N}}  \nabla U_i \dd x
\end{align*}
By standard arguments, we have:
\[
\lim_{N\to \infty} \|\nabla v_{per}(N^{1/3}x) \|^2_{L^2(E_N)}  = \|v_{per}\|^2_{\dot{H}^1(\mathbb T^3_d)}
\]
and the the second identity in \eqref{char.periodic} yields from the analysis of the periodic problem in \cite{Hasimoto59}.
Our proof thus reduces to obtaining that:
\[
\limsup_{N\to \infty} N^{-2/3} (rem_{1,N} + rem_{2,N}) =0.
\]

Concerning $rem_{1,N}$ we note that we have first the bound:
\[
|rem_{1,N}| \leq C  \|\nabla \sum_{\alpha \notin Z_N}  U_{\alpha} \|_{L^2(E_N)}  \left(  1 +  \|\nabla \sum_{\alpha \notin Z_N}  U_{\alpha} \|_{L^2(E_N)} \right) 
\]
Then, we apply Lemma \ref{prop_linfbound} to yield that, for arbitrary $x \in E_N$ there holds:
\[
\sum_{\alpha \notin Z_N}  |\nabla   U_\alpha(x)| \leq C \min  (N^{1/3},  \dist(x,\partial E_N)^{-1}).
\]
We note here that, to apply properly Lemma \ref{prop_linfbound}  we must invoke  an \enquote{invading domain} argument and firstly approximate the infinite sum by 
finite sums. The above bound yields from the remark that the right-hand side does not depend on the finite subset of $\mathbb Z^3 \setminus Z_N$ that we would choose. Recalling $E_N \subset E_{N+1}$ from \eqref{ass:Periodic} and that on the other side the sets $E_N$ are contained in a compact set independently of $N$ due to \eqref{eq:K}, we deduce with the dominated convergence theorem
\[
\limsup_{N\to \infty} N^{-1/3}\|\nabla \sum_{\alpha \notin Z_{N}}   U_{\alpha} \|_{L^2(E_N)}  = 0,  \qquad \limsup_{N \to \infty}N^{-2/3} rem_{1,N}  =0.
\]

Concerning $rem_{2,N}$, we can get similarly as above
\[
\limsup_{N\to \infty} N^{-1/3} \| \sum_{i \in I_{N}}  \nabla U_i\|^2_{L^2(\mathbb R^3 \setminus E_N)} = 0.
\]
Moreover, since by assumption \eqref{ass:Periodic} $ \#\{ i \notin I_N \} \ll N$, we have from the bound on $U_i$ in \eqref{est:diagona.in.particle}
that
\begin{align}
    N^{-1/3} \|\sum_{i \notin I_{N}} \nabla  U_i \|_{L^2(\mathbb R^3)} = 0.
\end{align}
Combining these estimates yields $\lim_{N \to \infty} N^{-2/3} rem_{2,N} = 0$
which concludes the proof.




\section{Computations in the ill-prepared case} \label{sec:unfavorable}

We provide here the computations in the ill-prepared case when \eqref{eq:gradient} is not satisfied.
Let $(u_N,p_N)$ be the solution to \eqref{def:u_N}. We introduce  again $(v_N,q_N)$ the solution to 
\begin{align} \label{app_def:v_N}
\left.
\begin{aligned}
 	- \Delta v_N + \nabla q_N &=  N^{2/3} \bar \rho_N e_3 &&  \text{in } \Omega\\
  \dv v_N &= 0&& \text{in } \Omega \\
      v_N &= 0&& \text{on } \partial \Omega
      \end{aligned}
      \right\}
 \end{align}
and $w_N = u_N - v_N.$ We point out that, without assumption \eqref{eq:gradient} we may not normalize the pressure to add the $-ne_3$ term to the right-hand side without modifying $v_N$.   
 
The main goal of this section is a proof of item \ref{it:thm.unfavorable} in Theorem \ref{th:scaling.N}.  We complement the proof with a more refined description of $\bar{V}_N$ at the end of this section.  To achieve our main goal we first provide the following proposition:
 
 \begin{prop} \label{prop:limit.unfavorable}
The vector-fields  $v_N$ and $w_N$ introduced above satisfy the following statements:
 \begin{itemize}[(i)]
\item there exists $(v_\ast,q_\ast) \in H^1_0(\Omega) \times H^{-1}(\Omega)$
for which $ N^{-2/3} v_N \to v_\ast$ in $H^1_0(\Omega)$ and:
\begin{align} \label{def:v_ast}
\left.
\begin{aligned}
 	- \Delta v_\ast + \nabla q_\ast &= ne_3 &&  \text{in } \Omega, \\
  \dv v_\ast &= 0 &&  \text{in } \Omega.
  \end{aligned}
  \right\}
 \end{align}
 \item there exists a constant $C$ which depends only on $c$ from \eqref{eq:minimal.distance} and $\Omega$ such that, for $N$ sufficiently large:
 \begin{align} \label{est:w_N.unfavorable}
 	\|w_N\|_{H^1(\Omega)} \leq C N^{2/3} r^{3/2}.
 \end{align}
 \end{itemize}
\end{prop}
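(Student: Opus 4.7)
The plan is to treat (i) and (ii) in turn. Part (i) is a quick consequence of the Poincaré-type estimates of Section \ref{sec:Poincare} combined with continuity of the Stokes resolvent, while part (ii) revisits the decomposition used in Section \ref{sec:favorable}, with the crucial difference that, since \eqref{eq:gradient} is not assumed, the term $n e_3$ can no longer be absorbed into the pressure.

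For (i), I would set $\tilde v_N := N^{-2/3} v_N$. By linearity, $\tilde v_N \in H^1_0(\Omega)$ solves the Stokes problem with right-hand side $\bar \rho_N e_3$ and zero boundary data. Proposition \ref{pro:Poincare} \ref{it:n.bar.rho_N} with $p=2$ yields the strong convergence $\bar \rho_N \to n$ in $H^{-1}(\Omega)$ at rate $N^{-1/3}$. Since $\Omega$ is contained in an infinite cylinder transverse to $\xi$ (cf. \eqref{eq:Omega.bounded.orthogonal}), Poincaré's inequality is available on $H^1_0(\Omega)$, and the Stokes resolvent is a bounded map $H^{-1}(\Omega) \to H^1_0(\Omega)$. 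Passing to the limit gives $\tilde v_N \to v_\ast$ strongly in $H^1_0(\Omega)$, with $v_\ast$ the solution of \eqref{def:v_ast}.

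For (ii), I first observe that $w_N$ solves \eqref{def:psi} with $\varphi = -v_N$, so that Proposition \ref{prop:variational.psi} reduces the task to the bound $\|D(v_N)\|_{L^2(\cup_i B_i)} \lesssim N^{2/3} r^{3/2}$. I would then decompose $v_N = v_{N,1} + v_{N,2} + \tilde v_{N,3}$, where $v_{N,1}$ and $v_{N,2}$ are as in \eqref{def:v_N,1}--\eqref{def:v_N,2}, and $\tilde v_{N,3}$ solves Stokes on $\Omega$ with right-hand side $N^{2/3} \sigma_N e_3$ and zero boundary data. The contribution of $v_{N,1}$ is bounded by $C N^{1/3} r^{3/2}$ via \eqref{v_N,1.in.particles} (whose proof does not invoke \eqref{eq:gradient}), and that of $v_{N,2}$ is negligible by the Bogovski\v{i}-type argument used to prove Proposition \ref{pro:contributions.v_N} \ref{it:v_N,2}.

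The main contribution comes from $\tilde v_{N,3}$, which I would handle by comparison with $N^{2/3} v_\ast$: the difference solves Stokes with right-hand side $N^{2/3}(\sigma_N - n) e_3$, and Proposition \ref{pro:Poincare} \ref{it:n.sigma_N} combined with continuity of the Stokes resolvent yields $\|\nabla(\tilde v_{N,3} - N^{2/3} v_\ast)\|_{L^2(\Omega)} \lesssim N^{1/3}$. Since $n e_3 \in L^\infty(\Omega)$ has compact support in $K$, elliptic regularity gives $v_\ast \in W^{2,p}_{loc}$ for all $p < \infty$, hence $\nabla v_\ast \in L^\infty_{loc}$; this yields $\|D(v_\ast)\|_{L^2(\cup_i B_i)} \leq \|D v_\ast\|_{L^\infty(K)} \, |\cup_i B_i|^{1/2} \lesssim r^{3/2}$. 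Summing the three contributions gives $\|D(v_N)\|_{L^2(\cup_i B_i)} \lesssim N^{2/3} r^{3/2} + N^{1/3}$, which yields the claim for $N$ large since $r$ is a fixed constant. The main technical hurdle is to justify the local $L^\infty$ bound on $\nabla v_\ast$ globally on a neighborhood of $K$ inside the possibly unbounded cylinder $\Omega$; this should follow from a cutoff argument analogous to Step 1 of the proof of Proposition \ref{pro:contributions.v_N} \ref{it:v_N,3}, localizing to a bounded $C^2$ subdomain containing $K$.
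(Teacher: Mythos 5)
Your proof is correct. Part (i) coincides with the paper's argument: the convergence $\bar\rho_N\to n$ in $H^{-1}(\Omega)$ from \eqref{est:n.bar.rho_N} with $p=2$, combined with boundedness of the Stokes solution operator on the cylinder-bounded domain $\Omega$. For part (ii) you share the paper's skeleton --- reduce to $\|D(v_N)\|_{L^2(\cup_i B_i)}$ via Proposition \ref{prop:variational.psi}, and control the main term by the $C^1$-regularity of $v_\ast$ near $K$, which gives $\|D(v_\ast)\|_{L^2(\cup_i B_i)}\lesssim r^{3/2}$ --- but you treat the error term by a different route. The paper simply writes $\|D(v_N)\|_{L^2(\cup_i B_i)}\le N^{2/3}\bigl(\|D(v_\ast)\|_{L^2(\cup_i B_i)}+C\|N^{-2/3}v_N-v_\ast\|_{H^1(\Omega)}\bigr)$ and absorbs the second term using the strong convergence already established in item (i); no further decomposition is needed. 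You instead re-run the splitting $v_{N,1}+v_{N,2}+\tilde v_{N,3}$ of Section \ref{sec:favorable}, correctly observing that \eqref{v_N,1.in.particles} and item \ref{it:v_N,2} of Proposition \ref{pro:contributions.v_N} do not invoke \eqref{eq:gradient}, and replacing $\sigma_N-n$ by $\sigma_N$ in the third piece since $ne_3$ can no longer be absorbed into the pressure. This is longer but buys something: via \eqref{est:n.sigma_N} your remainder is quantitative, $O(N^{1/3})$, rather than the paper's qualitative $o(N^{2/3})$, so the threshold for ``$N$ sufficiently large'' becomes explicit in terms of $r$. Both arguments require, as you note at the end, $C^1$-regularity of $v_\ast$ on a neighbourhood of $K$ up to $\partial\Omega$ (the balls may approach the boundary at scale $N^{-1/3}$), which is obtained by localizing to a bounded $C^2$ subdomain exactly as in Step 1 of the proof of Proposition \ref{pro:contributions.v_N} \ref{it:v_N,3}.
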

\begin{proof}
Item $i)$ is a direct consequence to \eqref{est:n.bar.rho_N} in {Proposition \ref{pro:Poincare}} by standard arguments on generalized solutions to Stokes system (see \cite[Theorem IV.1.1]{Galdi11}).  We point out that the result holds actually whether data are well-prepared or ill-prepared.  The main difference between the ill-prepared and well-prepared setting is that $v_*=0$ in the latter one. 

\medskip

Since $v_N/N^{2/3}$ converges to $v_*$ in $H^{1}_0(\Omega),$ we can bound $w_N$ as follows thanks to Proposition~\ref{prop:variational.psi}:
\[ \|\nabla w_N\|_{L^2(\Omega)} \leq C
\|D(v_N)\|_{L^2(\cup B_i)} \leq C N^{2/3} \left( \|D(v_*)\|_{L^2(\cup B_i)} + C\|N^{-2/3} v_N - v_{*}\|_{H^1(\Omega)} \right) 
\]
where the second term in the parenthesis can be made arbitrary small for $N$ large.  We remark then that $n \in L^{\infty}(\Omega)$ so that standard elliptic regularity results entail in particular that  $v_* \in W^{2,4}(\Omega')$ for arbitrary bounded $\Omega' \subset \Omega$ and thus $v_*  \in C^1(\bar{\Omega}).$ We infer then that the first term in the parenthesis is bounded by $r^{3/2}. $ This ends the proof.
\end{proof}

\begin{proof}[Proof of Theorem \ref{th:scaling.N}\ref{it:thm.unfavorable}] 
Assume \eqref{eq:gradient} is not satisfied. Then, by combining Proposition \ref{prop:limit.unfavorable} and \eqref{est:n.bar.rho_N} in {Proposition \ref{pro:Poincare}} (which implies the strong convergence of $\bar{\rho}_N$ to $n$ in $H^{-1}(\Omega)$),   we infer:
\begin{align*}
	N^{-2/3}  \bar V_N  & = N^{-2/3}  \langle v_N, \bar \rho_N \rangle_{H^1,H^{-1}} +  N^{-2/3} \langle w_N, \bar \rho_N \rangle_{H^1,H^{-1}}\\
	& \to   \langle  v_\ast, n \rangle_{H^1,H^{-1}} + O(r^{3/2}) =  \int_\Omega v_\ast  n + O(r^{3/2}),
\end{align*}
which yields \eqref{est:V.unfavorable} since $\|v_\ast\|_{H^1(\Omega)} \leq C \|n\|_{L^{\infty}(\Omega)}.$
Moreover,  we have via a standard energy estimate:
\begin{align}
	\int_\Omega v_\ast  \cdot n  e_3 = 
	  \| \nabla v_\ast\|_{L^2(\Omega)}^2,
\end{align}
which yields \eqref{est:V^sed.unfavorable} since $v_\ast \neq 0$ if \eqref{eq:gradient} is not satisfied. This ends our proof.
\end{proof}

To complement the analysis of the ill-prepared case, we provide a sharper description of  $\bar{V}_N$ for large values of $N.$  For this, we introduce  further notations for solutions to \eqref{def:v_ast}.  Indeed, we remark that this solution is fixed by the vector $e_3$ so that changing this value to another vector $\tilde{e} \in \mathbb R^3$ would yield a different velocity-field.  Below, we highlight this possible dependency by writing $v_*[\tilde{e}]$ the solution associated with the vector $\tilde{e} \in \mathbb R^3.$   We can now state our main proposition:
 \begin{prop}
Assume that \eqref{eq:gradient} does not hold.  Then, there exists $V_* \in \mathbb R^3$ such that:
 \begin{itemize}[(i)]
 \item there exists a constant $C$ independent of $N \in \mathbb N$ sufficiently large  for which 
 \[
 \bar{V}_N = N^{\frac 23} \left( V_* + rem_{N}\right) \quad \text{ with } \quad |rem_N| \leq C r.
 \]
 
 \item there holds:
 \[
 V_* \cdot  e= \int_{\Omega} \nabla v_*[e_3] : \nabla v_*[e] \quad \forall  \, e \in \mathbb S^{2}  
 \]
 \end{itemize}
\end{prop}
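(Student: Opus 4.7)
The plan is to \emph{define} $V_\ast := \int_\Omega n\, v_\ast[e_3] \dd x \in \R^3$ and to obtain both statements by projecting $\bar V_N$ onto an arbitrary direction $e \in \mathbb S^2$. The starting identity is
\[
\bar V_N \cdot e = \Ll\langle \bar\rho_N e,\, u_N \Rr\rangle = \Ll\langle \bar\rho_N e,\, v_N \Rr\rangle + \Ll\langle \bar\rho_N e,\, w_N \Rr\rangle,
\]
which follows from $V_i = \fint_{\partial B_i} u_N$ together with the decomposition $u_N = v_N + w_N$ of Proposition~\ref{prop:limit.unfavorable}. This mirrors the computation of $\bar V^{sed}_N$ already carried out in the proof of Theorem~\ref{th:scaling.N}\ref{it:thm.unfavorable}, except that the test direction $e$ is now arbitrary.

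For the $v_N$-contribution I would combine the strong convergence $N^{-2/3} v_N \to v_\ast[e_3]$ in $H^1_0(\Omega)$ from Proposition~\ref{prop:limit.unfavorable} with the strong convergence $\bar\rho_N \to n$ in $H^{-1}(\Omega)$ supplied by Proposition~\ref{pro:Poincare} (inequality \eqref{est:n.bar.rho_N} with $p=2$). Both imply the uniform $H^{-1}$ bound $\|\bar\rho_N\|_{H^{-1}} \leq C$ and give
\[
N^{-2/3}\Ll\langle \bar\rho_N e,\, v_N\Rr\rangle \longrightarrow \int_\Omega n\, e \cdot v_\ast[e_3] \dd x = V_\ast \cdot e,
\]
with an associated remainder $o_N(1)$ that vanishes as $N \to \infty$ for $r$ fixed. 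For the $w_N$-contribution, the estimate \eqref{est:w_N.unfavorable} together with the uniform $H^{-1}$ bound on $\bar\rho_N$ yields directly
\[
N^{-2/3}\Ll|\Ll\langle \bar\rho_N e,\, w_N\Rr\rangle\Rr| \leq C r^{3/2}.
\]
Summing and choosing $N$ large enough so that the $o_N(1)$ is absorbed into $Cr$ gives $|N^{-2/3} \bar V_N \cdot e - V_\ast \cdot e| \leq Cr$; since $e \in \mathbb S^2$ is arbitrary, this is statement~(i).

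For (ii), I would test the equation $-\Delta v_\ast[e] + \nabla q_\ast[e] = n e$ from \eqref{def:v_ast} against $v_\ast[e_3] \in H^1_0(\Omega)$. The divergence-free condition $\dv v_\ast[e_3] = 0$ kills the pressure contribution, leaving
\[
\int_\Omega \nabla v_\ast[e] : \nabla v_\ast[e_3] \dd x = \int_\Omega n\, e \cdot v_\ast[e_3] \dd x = V_\ast \cdot e,
\]
as claimed. The only technical point is ensuring that the strong convergence of $\bar\rho_N$ in $H^{-1}$ happens at a rate that is harmless compared to the target remainder $Cr$; this is exactly what Proposition~\ref{pro:Poincare} provides, so the remainder of the argument is routine bookkeeping with the Cauchy--Schwarz pairing in $H^1_0 \times H^{-1}$.
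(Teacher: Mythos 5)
Your proof is correct and uses essentially the same ingredients as the paper's: the decomposition $u_N=v_N+w_N$, the strong convergences $N^{-2/3}v_N\to v_\ast[e_3]$ in $H^1_0(\Omega)$ and $\bar\rho_N\to n$ in $H^{-1}(\Omega)$ from Propositions~\ref{prop:limit.unfavorable} and~\ref{pro:Poincare}, and the bound \eqref{est:w_N.unfavorable} to control $\langle\bar\rho_N e,w_N\rangle$. The only (harmless) difference is the order of operations: you pass to the limit first and then symmetrize by testing the equation for $v_\ast[e]$ against $v_\ast[e_3]$, whereas the paper introduces the auxiliary fields $v_N[e]$ at finite $N$, writes $\langle\bar\rho_N e,v_N[e_3]\rangle=N^{-2/3}\int_\Omega\nabla v_N[e]:\nabla v_N[e_3]$, and then takes the limit.
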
 

\begin{rem}
We first point out that $v_*$ does not depend on $r.$ So, when $\nabla n \times e \neq 0$ we have indeed captured the first order of $\bar{V}_N $ with a remainder smaller than $O(r).$ We note that we can use the system satisfied by $v_*$ to rewrite:
\[
V_* \cdot e = \int_{\Omega} v_*[e_3] \cdot ne = \int_{\Omega} v_*[e] \cdot n e_3 
\]
We also recall that, in the degenerate case $\nabla n \times e = 0,$ there holds $v_*[e] = 0.$ In this case,  the computations of the previous section hold and show that  $\bar{V}_N \cdot e \leq C N^{1/3}$ similarly as we obtained \eqref{est:V.favorable}. In particular, the results obtained in the present section are  not optimal in this direction. 
\end{rem}

\begin{proof}
We prove that, for arbitrary $e \in \mathbb S^2,$ there holds:
\[
\bar{V}_N \cdot e = N^{\frac 23} \left(  \int_{\Omega} \nabla v_*[e_3] : \nabla v_*[e] + O(r) \right )
\]
This shall complete the two items of the proposition simultaneously. 

\medskip

Given $e \in \mathbb S^2,$ let us denote by $u_N[e],v_N[e],w_N[e]$ the velocity-fields associated to the problem  \eqref{def:u_N} replacing $e_3$ by $e$
(analogously as the notation $v_*[e]$ introduced above).  By Proposition \ref{prop:limit.unfavorable},  we have: 
\[
\| \nabla v_N[e_3]\|_{L^2} + \|\nabla v_N[e]\|_{L^2} \leq C N^{2/3}, \qquad 
\| \nabla w_N[e_3]\|_{L^2} \leq C N^{2/3} r^{\frac 32}
\]
and 
\[
N^{-\frac 23}v_N[e_3] \to v_*[e_3]
\qquad 
N^{-\frac 23}v_N[e]  \to v_*[e] 
\quad \text{ in $H^1_0(\Omega).$}
\]
Furthermore,  like in the previous proof,  there holds:
\begin{align*}
N^{-\frac 23} \bar{V}_N \cdot e &
= N^{-\frac 43} \langle v_N[e_3],  - \Delta v_N[e] + \nabla q_N[e]\rangle_{H^1_0(\mathbb R^3),H^{-1}(\mathbb R^3)} +  N^{-\frac 23}\langle w_N[e_3], \bar \rho_N  e \rangle_{H^1_0(\Omega),H^{-1}(\Omega)} \\
& = {N^{-\frac 43}} \int_{\Omega} \nabla v_N[e_3] :  \nabla v_N[e]
+ N^{-\frac 23} \langle w_N[e_3], \bar \rho_N e \rangle_{H^1_0(\Omega),H^{-1}(\Omega)}
\end{align*}
We conclude by remarking that by Proposition \ref{prop:limit.unfavorable}
\[
\lim_{N\to \infty} \int_{\Omega} \nabla v_N[e_3] :  \nabla v_N[e] =  \int_{\Omega} \nabla v_*[e_3] :  \nabla v_*[e]
\]
and, by \eqref{est:rhoN.sigma_N}  and \eqref{est:n.sigma_N} in {Proposition \ref{pro:Poincare}} and the above bound on $w_N,$ that:
\[
{N^{-\frac 23}} \left|  \langle w_N[e_3], \bar \rho_N  e \rangle_{H^1_0(\Omega),H^{-1}(\Omega)} \right| \leq  N^{-\frac 23} \|\bar \rho_N\|_{H^{-1}(\Omega)} \|w_N[e_3]\|_{H^{1}(\Omega)} \leq  Cr.
\]
\end{proof}

\section*{Acknowledgements}

The two authors warmly thank David Gérard-Varet and Amina Mecherbert for fruitful discussions during the preparation of this paper.  

R.H. thanks Juan Velázquez for discussions that have played an important part in discovering the nature and significance of this problem.

R.H. has been supported  by the German National Academy of Science Leopoldina, grant LPDS 2020-10.
Moreover, R.H. acknowledges support by  the Deutsche Forschungsgemeinschaft (DFG, German Research Foundation) 
through the collaborative research center ``The Mathematics of Emerging Effects'' (CRC 1060, Projekt-ID 211504053) 
and the Hausdorff Center for Mathematics (GZ 2047/1, Projekt-ID 390685813). 

M.H. acknowledges support of the Institut Universitaire de France and project \enquote{SingFlows} ANR-grant number: ANR-18-CE40-0027.   This paper was partly written while M.H. was benefiting a \enquote{subside à savant} from Université Libre de Bruxelles.  He would like to thank  the mathematics department at ULB for its hospitality.

\printbibliography

\end{document}